\documentclass[12pt]{amsart}
\usepackage{colonequals}
\usepackage[alphabetic]{amsrefs}
\usepackage{enumerate}
\usepackage{amssymb}
\usepackage{fullpage}

\newtheorem{theorem}{Theorem}[section]
\newtheorem{lemma}[theorem]{Lemma}
\newtheorem{proposition}[theorem]{Proposition}
\newtheorem{corollary}[theorem]{Corollary}
\usepackage[colorlinks=true, pdfstartview=FitH, linkcolor=blue, citecolor=blue, urlcolor=blue]{hyperref}

\theoremstyle{definition}

\newtheorem{example}[theorem]{Example}
\newtheorem{question}[theorem]{Question}

\theoremstyle{remark}
\newtheorem{remark}[theorem]{Remark}

\numberwithin{equation}{section}


\newcommand{\bP}{\mathbb{P}}
\renewcommand{\char}{{\rm char}}
\newcommand{\Fp}{\mathbb{F}_p}
\newcommand{\Fq}{\mathbb{F}_q}

\makeatletter
\@namedef{subjclassname@2020}{%
  \textup{2020} Mathematics Subject Classification}
  \makeatother

\begin{document}

\title{Tangent-filling plane curves over finite fields}

\author{Shamil Asgarli}
\address{Department of Mathematics and Computer Science \\ Santa Clara University \\ 500 El Camino Real \\ USA 95053}
\email{sasgarli@scu.edu}

\author{Dragos Ghioca}
\address{Department of Mathematics, University of British Columbia, Vancouver, BC V6T 1Z2}
\email{dghioca@math.ubc.ca}

\subjclass[2020]{Primary 14H50, 11G20; Secondary 14G15, 14N05}
\keywords{Tangent-filling, plane curves, finite fields}

\begin{abstract}
We study plane curves over finite fields whose tangent lines at smooth $\mathbb{F}_q$-points together cover all the points of $\mathbb{P}^2(\mathbb{F}_q)$.
\end{abstract}

\maketitle

\section{Introduction}

The investigation of algebraic curves over finite fields is an ever-growing research topic. Stemming from the intersection of algebra, number theory and algebraic geometry, it influences a wide array of fields such as coding theory and combinatorial design theory \cite{HKT08}. As one specific example in this vast body of work, finding curves with many $\mathbb{F}_q$-rational points remains an interesting challenge. The motivation behind searching for extremal curves ranges from purely theoretical reasons (e.g. understanding the sharpness of Hasse-Weil inequality) to more applied constructions (e.g. obtaining a rich configuration of points). 

It is already instructive to restrict attention to plane curves. We list a few different definitions from the literature for a given projective irreducible plane curve $C\subset\mathbb{P}^2$ of degree $d$ over a finite field $\mathbb{F}_q$ to have ``a lot of $\Fq$-rational points".
\begin{enumerate}[(a)]
\item \label{def:maximal-curve} We say that $C$ is a \emph{maximal curve} if $\#C(\Fq) = q+1 + (d-1)(d-2)\sqrt{q}$, namely, the curve achieves the equality in the Hasse-Weil upper bound for its $\Fq$-rational points.
\item \label{def:plane-filling-curve} We say that $C$ is \emph{plane-filling} if $C(\Fq)=\bP^2(\Fq)$, that is, $C$ contains each of the $q^2+q+1$ distinct $\Fq$-points of $\mathbb{P}^2$. 
\item \label{def:blocking-curve}  We say that $C$ is \emph{blocking} if $C(\Fq)$ is a blocking set, that is, $C$ meets every $\Fq$-line $L$ at some $\Fq$-point.
\end{enumerate}

The main purpose of the present paper is to introduce a new concept that indicates in yet another way that the curve contains many $\Fq$-points. 
\begin{enumerate}[(a)]
 \setcounter{enumi}{3}
    \item \label{def:tangent-filling-curve} We say that $C$ is \emph{tangent-filling} if every point $P\in \mathbb{P}^2(\Fq)$ lies on a tangent line $T_Q C$ to the curve $C$ at some smooth $\Fq$-point $Q$.
\end{enumerate} 

Regarding the literature, we note that curves satisfying \eqref{def:maximal-curve} have been thoroughly studied in many papers ranging from foundational work \cites{CHKT00, GK09, GGS10} to the more recent discoveries \cites{BM18, BLM23}. The curves satisfying \eqref{def:plane-filling-curve} have been analyzed by \cites{HK13, Dur18, Hom20}. Finally, the curves satisfying \eqref{def:blocking-curve} have been recently examined by the authors in joint work with Yip \cites{AGY22a, AGY22b, AGY23a, AGY23b}.

Our first theorem shows that a curve of a low degree cannot be tangent-filling. We first state the result when the ground field is $\mathbb{F}_p$ for some prime $p$. For convenience, we state the result for $d\geq 3$ and discuss the case $d=2$ in Remark~\ref{rem:conic}.

\begin{theorem}\label{thm:low-degree-p}
Let $C\subset\mathbb{P}^2$ be an irreducible plane curve of degree $d\geq 3$ defined over $\mathbb{F}_p$ where $p$ is a prime. If $p\geq 4(d-1)^2(d-2)^2$, then $C$ is not tangent-filling.
\end{theorem}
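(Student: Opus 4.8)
The plan is to translate tangent-filling into a statement about blocking sets in the dual plane and then invoke Blokhuis' lower bound on the size of nontrivial blocking sets in $\bP^2(\Fp)$. Let $(\bP^2)^\vee$ denote the dual projective plane, whose $\Fp$-points parametrize the $\Fp$-lines of $\bP^2$. For a smooth $\Fp$-point $Q$ of $C$, the tangent line $T_Q C$ is an $\Fp$-line, hence an $\Fp$-point $Q^\vee$ of $(\bP^2)^\vee$; set
\[
\mathcal{D} = \{\, Q^\vee : Q \in C(\Fp) \text{ is a smooth point of } C \,\} \subset (\bP^2)^\vee(\Fp).
\]
The first step is the elementary observation that a point $P \in \bP^2(\Fp)$ lies on $T_Q C$ if and only if the dual point $Q^\vee$ lies on the dual line $P^\vee \subset (\bP^2)^\vee$, and that as $P$ ranges over $\bP^2(\Fp)$ the lines $P^\vee$ range over \emph{all} $\Fp$-lines of $(\bP^2)^\vee$. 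Consequently $C$ is tangent-filling if and only if every $\Fp$-line of $(\bP^2)^\vee$ meets $\mathcal{D}$, that is, if and only if $\mathcal{D}$ is a blocking set of $(\bP^2)^\vee \cong \bP^2(\Fp)$.

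Second, I would bound $\abs{\mathcal D}$. Since $Q \mapsto Q^\vee$ maps the smooth $\Fp$-points of $C$ onto $\mathcal D$, we have $\abs{\mathcal D} \le \#\{\text{smooth } \Fp\text{-points of } C\}$, and the Hasse--Weil bound applied to the normalization of $C$ (of geometric genus $g \le (d-1)(d-2)/2$) gives
\[
\abs{\mathcal D} \le p + 1 + 2g\sqrt p \le p + 1 + (d-1)(d-2)\sqrt p .
\]
The hypothesis $p \ge 4(d-1)^2(d-2)^2$ is equivalent to $2(d-1)(d-2)\sqrt p \le p$, whence $(d-1)(d-2)\sqrt p \le p/2 < (p+1)/2$ and therefore $\abs{\mathcal D} < \tfrac32(p+1)$. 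I would also record that $\mathcal D$ contains no full $\Fp$-line: such a line is the pencil of $\Fp$-lines through a point $R \in \bP^2$, so a line inside $\mathcal D$ would force all $p+1$ lines through $R$ to be tangent to $C$ at smooth points; but the tangent lines through a fixed point are cut out on $C$ by a polar curve of degree $d-1$, so by B\'ezout there are at most $d(d-1) < p+1$ of them.

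Finally, I would invoke Blokhuis' theorem: a nontrivial blocking set of $\bP^2(\Fp)$ (one containing no line), with $p$ prime, has at least $\tfrac32(p+1)$ points. If $C$ were tangent-filling, then $\mathcal D$ would be such a nontrivial blocking set, forcing $\abs{\mathcal D} \ge \tfrac32(p+1)$ and contradicting the previous step; hence $C$ is not tangent-filling. The prime hypothesis enters precisely here, since over a non-prime $\Fq$ the Baer-subplane blocking sets of size $q + \sqrt q + 1$ make this threshold false, which explains why the theorem is first stated over $\Fp$. I expect the main technical obstacle to be the verification that $\mathcal D$ is line-free in characteristic $p$: this requires ruling out strange curves (where all tangent lines pass through a single point) and, more generally, controlling any inseparability of the Gauss map $Q \mapsto Q^\vee$, so that each dual line genuinely meets the image in fewer than $p+1$ points; the bound $p \ge 4(d-1)^2(d-2)^2 > d$ should be used to exclude these pathologies. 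A secondary point needing care is that the Hasse--Weil estimate must be applied to the smooth $\Fp$-points of a possibly singular $C$ via its normalization.
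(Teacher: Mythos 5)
Your proposal follows the same skeleton as the paper's proof: identify the tangent lines at smooth $\Fp$-points of $C$ with a subset $\mathcal{D}$ of the dual plane, observe that tangent-filling is equivalent to $\mathcal{D}$ being a blocking set there, bound $\#\mathcal{D}$ above by Hasse--Weil, verify that $\mathcal{D}$ contains no full line, and contradict Blokhuis' lower bound $\tfrac{3}{2}(p+1)$; your numerical comparison with the hypothesis $p\geq 4(d-1)^2(d-2)^2$ is exactly the paper's. Your line-freeness argument via the polar curve and B\'ezout is a nice, more elementary substitute for the paper's (which uses that $C^{\ast}$ is irreducible of degree strictly between $1$ and $p+1$). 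However, there is a genuine gap precisely where you place your ``main technical obstacle'': you never close it. The polar/B\'ezout count fails exactly when the polar $aF_x+bF_y+cF_z$ of $R=[a:b:c]$ vanishes identically, i.e., when $C$ is strange with center $R$, and saying the bound $p>d$ ``should be used to exclude these pathologies'' is a promissory note, not an argument. The paper closes this by citing Pardini \cite{Par86}: $p>d$ forces $C$ to be reflexive, hence not strange. Within your own setup there is even an elementary fix you could have supplied: after moving $R$ to $[0:0:1]$, the identity $F_z\equiv 0$ forces every monomial of $F$ to have $z$-exponent divisible by $p$; since $d<p$, this means $F\in\overline{\mathbb{F}_p}[x,y]$, which splits into linear forms, contradicting geometric irreducibility. (Your parallel worry about inseparability of the Gauss map is a red herring for this theorem: the polar argument counts tangency points directly and never uses separability.)

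A second, unacknowledged gap: your proof silently assumes $C$ is \emph{geometrically} irreducible, while the theorem only assumes irreducibility over $\Fp$. Both your Hasse--Weil step (the genus bound $g\leq\tfrac{1}{2}(d-1)(d-2)$ and the estimate $p+1+2g\sqrt{p}$ for the normalization require geometric irreducibility) and the B\'ezout step degrade otherwise. The paper treats this case separately: if $C$ is $\Fp$-irreducible but not geometrically irreducible, then $\#C(\Fp)\leq d^2/4$ since all $\Fp$-points lie on pairwise intersections of conjugate components \cite{CM06}, so the at most $d^2/4$ tangent $\Fp$-lines cover at most $\tfrac{d^2}{4}(p+1)<p^2+p+1$ points of $\bP^2(\Fp)$, and $C$ cannot be tangent-filling. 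Some such argument must be added for your proof to cover the stated hypotheses.
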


We have an analogous result for an arbitrary finite field $\mathbb{F}_q$. 

\begin{theorem}\label{thm:low-degree-q}
Let $C\subset\mathbb{P}^2$ be an irreducible plane curve of degree $d\geq 2$ defined over $\mathbb{F}_q$. If $p>d$ and $q\geq d^2(d-1)^6$, then $C$ is not tangent-filling.
\end{theorem}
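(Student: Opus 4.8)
The plan is to dualize and reduce the problem to showing that the dual curve of $C$ is not a blocking set. Write $C^{*}\subset(\bP^{2})^{*}$ for the dual curve, and let $T\subseteq C^{*}(\Fq)$ be the set of tangent lines $T_{Q}C$ at smooth $\Fq$-points $Q$, regarded as $\Fq$-points of the dual plane. Duality reverses incidence, so a point $P\in\bP^{2}(\Fq)$ lies on some $T_{Q}C$ exactly when the dual line $P^{\vee}\subset(\bP^{2})^{*}$ meets $T$. As $P$ ranges over $\bP^{2}(\Fq)$ the line $P^{\vee}$ ranges over all $\Fq$-lines of $(\bP^{2})^{*}$, so $C$ is tangent-filling if and only if $T$ is a blocking set in $(\bP^{2})^{*}$. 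Since $T\subseteq C^{*}(\Fq)$, it therefore suffices to produce a single $\Fq$-line in $(\bP^{2})^{*}$ disjoint from $C^{*}(\Fq)$; that is, to show $C^{*}$ is not blocking.

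Next I would record the geometry of $C^{*}$, and this is where the hypothesis $p>d$ enters. For $p>d$ the Gauss map $\gamma\colon C\to C^{*}$ (defined on the smooth locus) is separable and birational onto its image, so $C^{*}$ is geometrically irreducible, its degree satisfies $e:=\deg C^{*}\le d(d-1)$, and its geometric genus equals that of $C$, hence is at most $\binom{d-1}{2}$. Bounding the $\Fq$-points of $C^{*}$ by those of its normalization together with its singular points, and applying Hasse--Weil to the normalization, gives
\[
N:=\#C^{*}(\Fq)\le q+1+(d-1)(d-2)\sqrt{q}+\binom{d(d-1)-1}{2},
\]
where the last term bounds the number of singular points of a plane curve of degree $e$ by its arithmetic genus.

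For the non-blocking step I would argue by contradiction via a double count in the dual plane. Assume every $\Fq$-line $\ell$ meets $C^{*}(\Fq)$ and set $n_{\ell}:=\#\bigl(\ell\cap C^{*}(\Fq)\bigr)$, so that $1\le n_{\ell}\le e$; the upper bound is B\'ezout, valid because $C^{*}$ is irreducible of degree $e\ge2$ and so contains no line. Counting incidences and collinear pairs over the $q^{2}+q+1$ lines yields $\sum_{\ell}n_{\ell}=N(q+1)$ and $\sum_{\ell}\binom{n_{\ell}}{2}=\binom{N}{2}$. Substituting these into the elementary inequality $\sum_{\ell}(n_{\ell}-1)(e-n_{\ell})\ge0$ produces the quadratic constraint
\[
\Phi(N):=N^{2}-(eq+e+1)N+e(q^{2}+q+1)\le0.
\]
Since the vertex of $\Phi$ sits near $eq/2$, far to the right of the bound for $N$ from the previous step, it is enough to verify that $\Phi$ is strictly positive at that bound; this contradicts the displayed inequality and shows $C^{*}$ is non-blocking. (For small $d$, e.g. $d=2$ where $e=2$, the discriminant of $\Phi$ is negative for large $q$, so $\Phi>0$ automatically.)

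The \textbf{main obstacle} is the closing numerical estimate, which is essentially sharp. After substituting the bound for $N$, the inequality $\Phi(N)>0$ reduces at leading order to $q>(d-1)(d-2)(e-2)\sqrt{q}$, that is, to $\sqrt{q}$ exceeding a quantity of size $\approx d^{4}$; this is precisely the regime $q\gtrsim d^{8}$ recorded in the cleaner form $q\ge d^{2}(d-1)^{6}$. The real work is thus to track the genus and singular-point contributions to $N$ carefully enough to land on the stated bound, and to check separately that the singular-point term $\binom{d(d-1)-1}{2}$ is dominated under the same hypothesis. Finally, the role of $p>d$ is exactly to secure the reflexivity facts used above, namely $\deg C^{*}\le d(d-1)$ and the equality of geometric genera; in small characteristic the Gauss map can be inseparable and these bounds may fail.
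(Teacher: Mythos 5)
Your proposal follows the same skeleton as the paper's proof: reduce tangent-filling to the assertion that $C^{*}(\Fq)$ is a blocking set in the dual plane, bound $\#C^{*}(\Fq)$ from above by Hasse--Weil plus an $O(d^{4})$ term coming from the singularities of $C^{*}$, and contradict a lower bound forced by the blocking property. The genuine difference is where that lower bound comes from. The paper cites \cite{AGY23a}*{Lemma 4.1}, which gives $\#C^{*}(\Fq) > q + (q+\sqrt{q})/\deg(C^{*})$ for a non-trivial blocking set contained in a curve with no linear components; you instead re-derive an equivalent constraint from scratch via the incidence identities $\sum_{\ell} n_{\ell}=N(q+1)$ and $\sum_{\ell}\binom{n_{\ell}}{2}=\binom{N}{2}$ together with $\sum_{\ell}(n_{\ell}-1)(e-n_{\ell})\ge 0$. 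Your quadratic $\Phi(N)\le 0$ does force $N$ to exceed roughly $q+q/e$, which is exactly what the cited lemma supplies, so your route is a self-contained replacement for the paper's external input (your $e=2$ discriminant remark is also correct, and note that reflexivity forces $e=2$ only when $d=2$). Your upper bound on $N$ (Hasse--Weil for the normalization plus at most $\binom{e-1}{2}$ singular points) differs only cosmetically from the paper's ($\#C(\Fq)$ by the Aubry--Perret bound plus the bitangent count of Lemma~\ref{lem:bitangents}); the numbers are identical. The closing estimate, which you only sketch, does in fact close: with $\sqrt{q}\ge d(d-1)^{3}$ one checks $q-e > \bigl((d-1)(d-2)\sqrt{q}+\tbinom{e-1}{2}\bigr)(e-2)$ using $(d-1)(d-2)(e-2)\le (d+1)(d-1)(d-2)^{2} < d(d-1)^{3}$, so this part is an unfinished computation rather than an error.

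The genuine gap is that you silently assume $C$ is geometrically irreducible. The theorem hypothesizes only irreducibility over $\Fq$, and every tool you invoke --- reflexivity and birationality of the Gauss map, geometric irreducibility of $C^{*}$, Hasse--Weil for the normalization, and the B\'ezout bound $n_{\ell}\le e$ --- is unavailable or false when $C$ splits into Galois-conjugate components over $\overline{\mathbb{F}_q}$. This case needs a separate (easy) argument, as in the paper: such a curve has $\#C(\Fq)\le d^{2}/4$, so the tangent lines at smooth $\Fq$-points cover at most $\tfrac{d^{2}}{4}(q+1)<q^{2}+q+1$ points under the stated hypothesis on $q$. (Even more directly: Galois permutes the geometric components transitively, so every point of $C(\Fq)$ lies on at least two components and is therefore singular; hence there are no smooth $\Fq$-points at all.) Adding this case, and writing out the final inequality in full, would complete your proof.
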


Let us briefly compare the bounds in these two theorems. The bound $p\geq O(d^4)$ in Theorem~\ref{thm:low-degree-p} is replaced with a pair of bounds $p>d$ and $q\geq O(d^8)$ in Theorem~\ref{thm:low-degree-q}. From one perspective, Theorem~\ref{thm:low-degree-q} provides worse bounds on $q$, and it remains open to improve $q\geq O(d^8)$ to $q\geq O(d^4)$. From another perspective, Theorem~\ref{thm:low-degree-q} provides better bounds on the characteristic $p$; for instance, when $q=p^n$ with $n=4$, the bound $p^4=q\geq O(d^8)$ is equivalent to $p\geq O(d^2)$, which is a weaker hypothesis than the earlier bound $p\geq O(d^4)$. It is also natural to consider the situation where we restrict our attention to a more restrictive class of smooth curves; in this case, Remark~\ref{rem:kaji} explains to obtain a slightly improved result.

We are also interested in finding examples of tangent-filling curves. Clearly, any smooth plane-filling curve is tangent-filling. Since the degree of the smallest plane-filling curve over $\mathbb{F}_q$ is $q+2$ by \cite{HK13}, it is natural to search for tangent-filling curves with degrees less than $q+2$. Our next theorem exhibits an example of a tangent-filling curve of degree $q-1$.

\begin{theorem}\label{thm:special-curve}
    Let $q\geq 11$ and $p=\operatorname{char}(\mathbb{F}_q)>3$. The curve $C$  defined by the equation
    $$
    x^{q-1} + y^{q-1} + z^{q-1} - 3(x+y+z)^{q-1} = 0
    $$
    is an irreducible tangent-filling curve. 
\end{theorem}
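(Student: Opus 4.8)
The plan is to analyze the curve $C: x^{q-1}+y^{q-1}+z^{q-1}-3(x+y+z)^{q-1}=0$ by exploiting the arithmetic of $(q-1)$-th powers over $\Fq$. Recall that for any $a\in\Fq^\times$ we have $a^{q-1}=1$, while $0^{q-1}=0$. Thus for an $\Fq$-point $P=[a:b:c]$, the value of a $(q-1)$-th power coordinate is determined entirely by whether that coordinate vanishes. This combinatorial rigidity is what I expect to drive the whole argument. First I would prove irreducibility: I would show the defining polynomial $F$ has no factor over $\overline{\Fq}$, most plausibly by examining its behavior along the coordinate lines and the line $x+y+z=0$, or by computing the intersection of $C$ with a generic line and checking the resulting one-variable polynomial is separable of the right degree. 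Irreducibility of such Fermat-like perturbations typically follows from a careful look at singularities or from the fact that $F$ is not a product of lower-degree forms respecting the symmetry.

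The core of the proof is the tangent-filling property. The strategy is: for each of the $q^2+q+1$ points $P\in\bP^2(\Fq)$, exhibit a smooth $\Fq$-point $Q\in C$ whose tangent line $T_Q C$ passes through $P$. The gradient of $F$ is $\nabla F = \bigl((q-1)x^{q-2}-3(q-1)(x+y+z)^{q-2},\ \ldots\bigr)$, and since $q-1\equiv -1$ and the prime $p>3$ is invertible, I can rescale. The key simplification is that for an $\Fq$-point $Q=[a:b:c]$ with all coordinates and $a+b+c$ nonzero, one has $a^{q-2}=a^{-1}$ and $(a+b+c)^{q-2}=(a+b+c)^{-1}$, so the tangent line at such $Q$ has the explicit equation $(a^{-1}-3s^{-1})x+(b^{-1}-3s^{-1})y+(c^{-1}-3s^{-1})z=0$ where $s=a+b+c$. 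The tangent-filling condition then becomes: given $P=[u:v:w]$, solve for an $\Fq$-point $Q$ on $C$ satisfying the linear relation $(a^{-1}-3s^{-1})u+(b^{-1}-3s^{-1})v+(c^{-1}-3s^{-1})w=0$, subject to $Q\in C$. I would parametrize the family of $\Fq$-points on $C$ with all coordinates nonzero — here the membership condition $a^{q-1}+b^{q-1}+c^{q-1}-3s^{q-1}=0$ reduces, when $s\neq 0$, to $1+1+1-3=0$, which holds automatically. So \emph{every} $\Fq$-point with $abc\neq 0$ and $a+b+c\neq 0$ lies on $C$; this is the crucial observation, and it gives an abundance of rational points whose tangents I can deploy.

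Given this abundance, the remaining task is a counting/covering argument: I must show that as $Q$ ranges over all $\Fq$-points $[a:b:c]$ with $abc(a+b+c)\neq 0$, the associated tangent lines cover every $P\in\bP^2(\Fq)$. For a fixed target $P=[u:v:w]$, the condition is a single linear equation in the ``dual'' coordinates $(a^{-1}-3s^{-1}, b^{-1}-3s^{-1}, c^{-1}-3s^{-1})$. I would substitute new variables $X=1/a$, $Y=1/b$, $Z=1/c$ (after normalizing $Q$ so that, say, one coordinate equals $1$) and translate the tangency condition into a polynomial equation over $\Fq$, then invoke a point-count to guarantee a solution with the required nonvanishing constraints. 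Concretely, the expected approach is to reduce to finding an $\Fq$-point on an auxiliary curve or conic cut out by the tangency condition and count its points via Hasse--Weil or an elementary character-sum estimate, using $q\geq 11$ to ensure the count exceeds the number of excluded degenerate points (those with a zero coordinate or lying on forbidden lines). I also need to handle the target points $P$ that themselves lie on a coordinate line or on $x+y+z=0$ separately, since the generic parametrization may degenerate there; the symmetry of $F$ under permuting $x,y,z$ should let me treat these special loci uniformly.

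The main obstacle I anticipate is the covering step, not the point-count per se but ensuring the tangent line genuinely passes through \emph{arbitrary} $P$, including the awkward points on the lines $xyz=0$ and $x+y+z=0$ where the clean formula $a^{q-2}=a^{-1}$ breaks down for $Q$ and where $P$'s own coordinates may force the linear tangency equation to degenerate or become unsolvable within the allowed region. Controlling these boundary cases, and verifying that the threshold $q\geq 11$ is exactly what makes the relevant point-count strictly positive after removing all degenerate $Q$, is where the real work lies; I expect to need a careful case analysis organized by how many coordinates of $P$ vanish, combined with the $S_3$-symmetry to cut the casework to a few representative configurations.
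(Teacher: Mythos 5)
Your two correct observations---that $C(\Fq)$ consists exactly of the points with $abc(a+b+c)\neq 0$, and that the tangent line at such a point is $(a^{-1}-3s^{-1})x+(b^{-1}-3s^{-1})y+(c^{-1}-3s^{-1})z=0$---match the paper's starting point. But the proposal has genuine gaps in both halves. For irreducibility you only list candidate strategies (generic line sections, symmetry) without executing any, and none is routine here. The paper's argument requires first determining the full singular locus of $C$ over $\overline{\Fq}$: a real computation, which reduces to the conic $x_0^2+y_0^2-x_0y_0-x_0-y_0+1=0$ and a case split according to whether $\sqrt{-3}\in\Fq$, with outcome that $[1:1:1]$ is the \emph{unique} singular point. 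Irreducibility then follows from a local argument: the tangent cone at $[1:1:1]$ is $s^2-st+t^2$, which has distinct roots (discriminant $-3\neq 0$ as $p>3$), so two putative components would meet there with multiplicity exactly $1$, while B\'ezout forces total intersection at least $2$ concentrated at the unique singular point. Note also that this singular-locus computation is needed even for your tangent-filling step: you require $Q$ to be smooth, and $[1:1:1]$ is an $\Fq$-point of $C$ lying inside your ``abundant'' region, so without knowing the singular locus you cannot certify any chosen $Q$.

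The covering step is the more serious problem: as proposed, it cannot reach the stated threshold $q\geq 11$. You miss the key simplification: since every point with $abc(a+b+c)\neq 0$ lies on $C$, any target $P$ in this region other than $[1:1:1]$ is itself a smooth $\Fq$-point of $C$, and trivially $P\in T_PC$, so one takes $Q=P$. This disposes of all but roughly $4q$ targets instantly; the targets on the four excluded lines are then handled by exhibiting explicit solutions of the tangency equation (easy, since one of $u,v,w$ or $u+v+w$ vanishes), and only $P=[1:1:1]$ needs a count, which the paper does by parametrizing the resulting cubic through the conic $w^2=(x-1)(x-9)$, getting at least $q-3$ usable points and hence the sharp threshold $q>7$. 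Your plan instead runs a point count for \emph{every} $P$ on the cubic $3(u+v+w)abc=(a+b+c)(ubc+vac+wab)$. This requires absolute irreducibility uniformly in $P$, which fails for some $P$ (when $u+v+w=0$ the cubic acquires the component $a+b+c=0$, lying entirely in the excluded locus); and even when the cubic is irreducible, the crude estimate $q+1-2\sqrt{q}$ minus up to twelve points on the four excluded lines (and minus $[1:1:1]$) is negative for $q\in\{11,13,17,19\}$. So the blanket Hasse--Weil strategy, besides being left unexecuted, genuinely cannot prove the theorem in the range where it matters; some form of the $Q=P$ observation, or a much finer case-by-case accounting of excluded points, is indispensable.
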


\begin{remark}
\label{rem:alt_char}
We note that if $\char(\Fq)=2$ in Theorem~\ref{thm:special-curve}, then the curve $C$ is reducible, as it contains the lines $x=y$, $y=z$ and $z=x$. 

On the other hand, if $\char(\Fq)=3$, then curve $C$ in Theorem~\ref{thm:special-curve} is smooth, but it is not tangent-filling since no tangent line at a point of $C(\Fq)$ passes through any of the points $[1:0:0]$, $[0:1:0]$ and $[0:0:1]$. This claim can be easily checked since the points $[x_0:y_0:z_0]\in C(\Fq)$ have the property that $x_0y_0z_0\ne 0$ (the proof of this fact follows similarly as in Lemma~\ref{lem:rat-points}, which characterizes the $\Fq$-points of $C$ when $\char(\Fq)>3$), while the equation of the tangent line at the point $[x_0:y_0:z_0]\in C(\Fq)$ is
$$x_0^{q-2}\cdot x + y_0^{q-2}\cdot y + z_0^{q-2}\cdot z = 0.$$ 
Finally, a simple computer check shows that the curve $C$ from Theorem~\ref{thm:special-curve} is not tangent-filling when $q\in\{5,7\}$ (see also Remark~\ref{rem:q_7}).
\end{remark}

While we expect that $d=q-1$ is not the smallest possible degree of a tangent-filling curve, we believe that Theorem~\ref{thm:special-curve} is novel in several ways. First, checking the tangent-filling condition over $\mathbb{F}_q$ requires careful analysis of the $\mathbb{F}_q$-points of the curve. Second,   in our previous work with Yip \cites{AGY22a}, we found several families of blocking smooth curves of degree less than $q$ and so, it was natural to test those families whether they are also tangent-filling; however, none of the tested families of blocking smooth curves turned out to be tangent-filling. This suggests that finding tangent-filling curves may be very challenging, much more than the case of blocking curves. In particular, finding tangent-filling curves of degree less than $q$ seems very difficult in general.  Quite interestingly, the curve from Theorem~\ref{thm:special-curve} is \emph{not} blocking since $C(\Fq)$ does not intersect the $\Fq$-lines $x=0$, $y=0$, $z=0$ and $x+y+z=0$ (see also Corollary~\ref{cor:non-blocking}).

We remark that when $q$ has a special form, there are more optimal examples. The noteworthy example is the Hermitian curve $\mathcal{H}_q$ defined by $x^{\sqrt{q}+1} + y^{\sqrt{q}+1} + z^{\sqrt{q}+1} = 0$ when $q$ is a square. We will see in Example~\ref{example:hermitian} that $\mathcal{H}_q$ is a tangent-filling curve. Thus, for $q$ square, there is a (smooth) tangent-filling curve of degree $\sqrt{q}+1$. 

Inspired by the example in the previous paragraph, we may ask for the most optimal curve that has the tangent-filling property. 
\begin{question}\label{quest:optimal-tangent-filling-curve}
What is the minimum degree of an irreducible tangent-filling plane curve over $\mathbb{F}_q$?
\end{question} 

Let us explain a heuristic that suggests that the optimal degree may not be too far away from $\sqrt{q}$ even for a general $q$. Consider a collection $\mathcal{L}$ of $\Fq$-lines such that 
\begin{equation}\label{eq:blocking-set-in-dual-space}
\bigcup_{L\in \mathcal{L}} L(\Fq) = \mathbb{P}^2(\Fq).
\end{equation}
By viewing each line as a point in the dual space $(\mathbb{P}^2)^{\ast}$, the condition~\eqref{eq:blocking-set-in-dual-space} is equivalent to $\mathcal{L}$ being a blocking set in $(\mathbb{P}^2)^{\ast}(\Fq)$. There are plenty of blocking sets with size a constant multiple of $q$; for instance, the so-called \emph{projective triangle}, a well-known example of a blocking set, has $\frac{3}{2}(q+1)$ points for odd $q$ \cite{Hir79}. So, we choose $\mathcal{L}$ that satisfies \eqref{eq:blocking-set-in-dual-space} and $|\mathcal{L}|=c_0 q$ for some constant $c_0>0$. Next, suppose that it is possible to pick distinct $\Fq$-points $P_i\in L_i$ for each $L_i\in\mathcal{L}$, so that $P_i\neq P_j$ for $i\neq j$. Let us impose the condition that a degree $d$ curve passes through the point $P_i$ and has contact order at least $2$ with the line $L_i$ at the point $P_i$. For each value of $i$, this imposes $2$ linear conditions in the parameter space $\mathbb{P}^N$ of plane curves of degree $d$, where $N=\binom{d+2}{2}-1$. Assuming that $\binom{d+2}{2}-1 > 2 |\mathcal{L}| = 2c_0q$, we obtain a curve of degree $d$ satisfying each of these local conditions. By construction, each such curve is tangent to the line $L_i$ at the point $P_i$, and tangent-filling property is enforced by \eqref{eq:blocking-set-in-dual-space}. The main issue is that all such resulting curves may be singular at one (or more) of the points $P_i$. While the bound of the form $d > c_1\sqrt{q}$ for some constant $c_1>0$ is predicted by this heuristic, it seems very challenging to make this interpolation argument precise. 

\subsection*{Structure of the paper.} In Section~\ref{sect:low-degree-curves}, we borrow tools from classical algebraic geometry and combinatorics of blocking sets to prove our Theorems~\ref{thm:low-degree-q}~and~\ref{thm:low-degree-p}. In Section~\ref{sect:family-curves}, we prove Theorem~\ref{thm:special-curve} by studying in detail the geometric properties of the given curve $C$, such as its singular locus and irreducibility, along with an arithmetic analysis for the equation of a tangent line at a smooth $\Fq$-point of $C$.

\subsection*{Acknowledgments.} We thank the anonymous referee for their useful comments and suggestions, which improved our presentation.

\section{Curves of low degree are not tangent-filling} \label{sect:low-degree-curves}

In this section, we prove Theorem~\ref{thm:low-degree-q} and Theorem~\ref{thm:low-degree-p}. We start with preliminary geometric constructions. Given a plane curve $C$, recall that the dual curve $C^{\ast}$ parametrizes the tangent lines to $C$. More formally, $C^{\ast}$ is the closure of the image of the Gauss map $\gamma_G\colon C\to (\mathbb{P}^{2})^{\ast}$ mapping a regular point $P$ on $C$ to the line $T_P C$. 

When the Gauss map $\gamma_G$ is separable, the geometry of the tangent lines to the curve in characteristic $p$ is similar to the behaviour observed in characteristic $0$. It turns out that the curve $C$ is \emph{reflexive} (that is, the double dual $(C^{\ast})^{\ast}$ can be canonically identified with $C$ itself) if and only if $\gamma_G$ is separable \cite{Wal56}. Thus, all curves in characteristic $0$ are reflexive. In positive characteristic $p$, the condition $p>d$ is sufficient to ensure that a plane curve of degree $d$ is reflexive \cite{Par86}*{Proposition 1.5}.

\subsection{Bitangents} For a given plane curve $C$, we say that a line $L$ is \emph{bitangent} to $C$ if $L$ is tangent to the curve $C$ in at least two points. The following is a well-known result in classical algebraic geometry; we include its proof to emphasize how the hypothesis $p>d$ is used. Since it is possible to have a curve with infinitely many bitangents \cite{Pie94}*{Example 2}, the lemma below would not be true if we completely remove the assumption $p>d$. 

\begin{lemma}\label{lem:bitangents}
Let $C\subset\mathbb{P}^2$ be a geometrically irreducible plane curve of degree $d\geq 2$ defined over $\mathbb{F}_q$ such that $p>d$. Then $C$ has at most $ \frac{1}{2}d^2(d-1)^2$ many bitangents.
\end{lemma}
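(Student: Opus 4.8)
The plan is to pass to the dual curve $C^{\ast}$ and translate the problem of counting bitangents into the problem of counting singular points of $C^{\ast}$. Since $p>d$, the curve $C$ is reflexive by Pardini's result quoted above, so the Gauss map $\gamma_G\colon C\to C^{\ast}$ is separable; for a reflexive curve this map is moreover birational onto its image, so $C^{\ast}$ is a geometrically irreducible plane curve of some degree $m$ (the \emph{class} of $C$), and the geometric genus of $C^{\ast}$ equals that of $C$. The whole argument rests on the observation that a bitangent line is a point of $(\mathbb{P}^2)^{\ast}$ through which two distinct branches of $C^{\ast}$ pass, i.e. a singular point of $C^{\ast}$.

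First I would bound the class $m$. Taking a general point $P\in\mathbb{P}^2$ together with its polar curve, which has degree $d-1$, Bézout's theorem gives at most $d(d-1)$ intersection points of $C$ with the polar. Since $m$ equals the number of tangent lines to $C$ through a general point, and each such tangency occurs at an intersection point of $C$ with the polar lying at a smooth point of $C$, we obtain $m\leq d(d-1)$.

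Next I would establish the correspondence precisely. If a line $L$ is tangent to $C$ at two distinct smooth points $Q_1\neq Q_2$, then $\gamma_G(Q_1)=\gamma_G(Q_2)=[L]$, so two distinct branches of $C^{\ast}$ pass through $[L]$, forcing $[L]$ to be a singular point of $C^{\ast}$ (a line tangent at $k\geq 2$ smooth points yields a point of multiplicity at least $k$). Distinct bitangents correspond to distinct points of $(\mathbb{P}^2)^{\ast}$, hence to distinct singular points of $C^{\ast}$, so the number of bitangents of $C$ is at most the number of singular points of $C^{\ast}$. To bound the latter I would invoke the genus estimate: a geometrically irreducible plane curve of degree $m$ has arithmetic genus $\binom{m-1}{2}$, and since $\sum_{P}\delta_P=p_a-g\leq\binom{m-1}{2}$ with each singular point contributing $\delta_P\geq 1$, the curve $C^{\ast}$ has at most $\binom{m-1}{2}$ singular points. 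Combining with $m\leq d(d-1)$ and the monotonicity of $\binom{m-1}{2}$ gives the number of bitangents at most $\binom{d(d-1)-1}{2}\leq\tfrac{1}{2}d^2(d-1)^2$, since $(d(d-1)-1)(d(d-1)-2)\leq (d(d-1))^2$ for $d\geq 2$.

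The main obstacle I anticipate is justifying, in characteristic $p$, that each bitangent genuinely produces two distinct branches through an honestly singular point of $C^{\ast}$: an inseparable Gauss map could collapse tangency data and destroy the correspondence, so this is exactly the step where the hypothesis $p>d$ (via reflexivity and separability) is indispensable. Closely related, one must confirm that $\gamma_G$ is birational onto $C^{\ast}$ so that the arithmetic-genus bound is being applied to the correct curve of the correct degree. Once reflexivity is in hand, the remaining steps are standard Bézout and genus computations.
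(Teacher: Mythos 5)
Your proof is correct and follows essentially the same route as the paper's: use $p>d$ to get separability/reflexivity of the Gauss map, bound $\deg C^{\ast}\leq d(d-1)$, identify bitangents with singular points of the geometrically irreducible dual curve, and bound those by $\binom{d(d-1)-1}{2}\leq\tfrac12 d^2(d-1)^2$. The only difference is that you supply proofs (via polars, branches of the normalization, and the genus/$\delta$-invariant estimate) of the facts the paper simply cites from Wallace and Harris's Exercise 20.18.
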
 

\begin{proof}
The condition $p>d$ guarantees that the Gauss map $\gamma_G$ is separable. The dual curve $C^{\ast}$ has degree $\delta \leq d(d-1)$. Since $C^{\ast}$ is geometrically irreducible, it has at most $\binom{\delta-1}{2}$ many singular points \cite{Har92}*{Exercise 20.18}. Every bitangent of the curve $C$ corresponds to some singular point of $C^{\ast}$, because $\gamma_G$ is separable \cite{Wal56}. Thus, the number of bitangents to $C$ is at most
$$
\binom{\delta-1}{2} \leq \binom{d(d-1)-1}{2}
 = \frac{1}{2}\left(d^2-d-1\right)\left(d^2-d-2\right) \leq \frac{1}{2}\left(d^2-d\right)\left(d^2-d\right)
$$
as desired. \end{proof} 

The previous lemma would hold if we replaced the hypothesis $p>d$ with the weaker hypothesis that the Gauss map of $C$ is separable. 

\subsection{Strange curves} We say that an irreducible plane curve $C$ of degree $d\geq 2$ over a field $K$ is \emph{strange} if all the tangent lines to the curve $C$ at its smooth $\overline{K}$-points are concurrent. This is equivalent to requiring that the dual curve of $C$ is a line. Since the double dual of a strange curve cannot be the original curve, it follows that strange curves must be nonreflexive. In particular, strange curves can only exist when $p=\operatorname{char}(K)>0$. Strange curves do exist \cite{Pie94}*{Example 1}: for instance, all the tangent lines to the curve $xy^{p-1}-z^p=0$ pass through the point $[0:0:1]$. The paper \cite{BH91} contains several results on various properties and characterizations of strange curves.

As mentioned in the beginning of the section, the hypothesis $p>d$ ensures that the curve is reflexive. Thus, a plane curve of degree $d\geq 2$ cannot be strange whenever $p>d$. This fact will be crucially used in the proofs below, when we verify that the $\Fq$-points of the dual curve $C^{\ast}$ do not produce a trivial blocking set.

\subsection{Proofs of Theorem~\ref{thm:low-degree-p} and Theorem~\ref{thm:low-degree-q}.} We now present the proof of our first main theorem which roughly states that tangent-filling curves over $\Fp$ cannot exist when $p$ is larger than a quadratic function of $d$.

\begin{proof}[Proof of Theorem~\ref{thm:low-degree-p}] We first assume that $C$ is geometrically irreducible.  We start by observing that the hypothesis $p\geq 4(d-1)^2(d-2)^2$ implies $p>d$ for $d\geq 3$. Thus, the curve $C$ is reflexive, and in particular, $C$ is not strange, meaning that $\deg(C^{\ast})>1$. By applying Hasse-Weil bound \cite{AP96}*{Corollary 2.5}, we have
\begin{align*}
    \# C(\Fp) \leq p+1 + (d-1)(d-2) \sqrt{p}.
\end{align*}
Suppose, to the contrary, that $C$ is tangent-filling. Let $B\subseteq C^{\ast}(\Fq)$ correspond to the set of tangent $\Fp$-lines to the curve $C$ at smooth $\Fp$-points. It is clear that
\begin{align}\label{ineq:upper-bound-B}
      \# B \leq \# C(\Fp)\leq p+1 + (d-1)(d-2) \sqrt{p}.
\end{align}
Note that $B$ is a blocking set by definition of tangent-filling; indeed, each $\Fp$-line in the dual projective plane parametrizes lines passing through a fixed point, so $B$ meets every $\Fp$-line in the dual space. Since $1<\deg(C^{\ast})\leq d(d-1) < p+1$, the set $B$ is a non-trivial blocking set, that is, $B$ cannot contain all the $\Fp$-points of some $\Fp$-line in $(\mathbb{P}^2)^{\ast}(\Fp)$. Indeed, $C^{\ast}$ is irreducible (as it is the image of the irreducible curve $C$ through the map $\gamma_G$) and has degree less than $p+1$. By Blokhuis' theorem ~\cite{Blo94}, 
\begin{align}\label{ineq:lower-bound-B}
\#B\geq \frac{3}{2}(p+1).
\end{align}
Combining \eqref{ineq:upper-bound-B} and \eqref{ineq:lower-bound-B}, we get $p+1 + (d-1)(d-2) \sqrt{p} \geq \frac{3}{2}(p+1)$ which contradicts the hypothesis $p\geq 4(d-1)^2(d-2)^2$.

Now, suppose that $C$ is not geometrically irreducible. Since $C$ is irreducible but not geometrically irreducible, we conclude that $\# C(\Fp) \leq \frac{d^2}{4}$ (see \cite{CM06}*{Lemma 2.3} or \cite{AG23}*{Remark 2.2}). In particular, the number of distinct tangent $\mathbb{F}_p$-tangent lines to $C$ is at most $\frac{d^2}{4}$. Since each $\Fp$-line covers $p+1$ points of $\mathbb{P}^2(\Fp)$, all the tangent lines to $C$ at its smooth $\Fp$-points together can cover at most $\frac{d^2}{4}\cdot (p+1)$
distinct $\Fq$-points. Since $p\geq 4(d-1)^2(d-2)^2$, it is immediate that $\frac{d^2}{4}\cdot (p+1)<p^2+p+1$, so $C$ is not tangent-filling.  \end{proof}

\begin{remark}\label{rem:conic}
Note that the inequality $p\geq 4(d-1)^2(d-2)^2$ automatically implies $p>d$ when $d\geq 3$. However, when $d=2$, the inequality  $p\geq 4(d-1)^2(d-2)^2$ is vacuous, and $p=2$ is allowed. When $p=2$ and $d=2$, the smooth conics are strange curves, which are therefore tangent-filling because the tangent lines at the $\Fq$-rational points of this conic are all the $q+1$ lines passing through some given point in $\bP^2(\Fq)$. So, Theorem~\ref{thm:low-degree-p} does not hold when $p=d=2$; on the other hand, Theorem~\ref{thm:low-degree-p} continues to hold when $d=2$ and $p>2$ with essentially the same proof as the one above.
\end{remark}

We proceed to prove our second main result concerning tangent-filling curves over an arbitrary finite field $\mathbb{F}_q$. 

\begin{proof}[Proof of Theorem~\ref{thm:low-degree-q}] We first assume that the curve $C$ is geometrically irreducible, that is, irreducible over $\overline{\mathbb{F}_q}$. We claim that $C^{\ast}$ is not a blocking curve. Suppose, to the contrary, that $C^{\ast}(\Fq)$ is a blocking set in $(\mathbb{P}^2)^{\ast}(\Fq)$. Since $p>d$, the curve $C$ is not strange, that is, $\deg(C^{\ast})>1$. Since $1<\deg(C^{\ast})\leq d(d-1) < q+1$, the set $B$ is a \emph{non-trivial} blocking set by the same reasoning given in the proof of Theorem~\ref{thm:low-degree-p}. By \cite{AGY23a}*{Lemma 4.1}, 
\begin{equation}\label{ineq:lower-bound-dual}
    \# C^{\ast}(\Fq) > q + \frac{q+\sqrt{q}}{\deg(C^{\ast})} \geq q + \frac{q+\sqrt{q}}{d(d-1)}.
\end{equation}
On the other hand, the number of $\Fq$-points on the dual curve $C^{\ast}$ is bounded above:
\begin{equation}\label{ineq:upper-bound-dual}
    \# C^{\ast}(\Fq)\leq \#C(\Fq) + \# \{\text{bitangents to } C \text{ defined over } \Fq\}.
\end{equation}
Combining Lemma~\ref{lem:bitangents}, Hasse-Weil bound applied to $C$ \cite{AP96}*{Corollary 2.5}, and \eqref{ineq:upper-bound-dual}, we obtain an upper bound:
\begin{equation}\label{ineq:upper-bound-dual-explicit}
    \# C^{\ast}(\Fq)\leq q+1 + (d-1)(d-2)\sqrt{q} + \frac{1}{2} d^2(d-1)^2.
\end{equation}
Comparing \eqref{ineq:lower-bound-dual} and \eqref{ineq:upper-bound-dual-explicit}, we obtain
$$
(d-1)(d-2)\sqrt{q} + \frac{1}{2} d^2(d-1)^2 + 1 > \frac{q+\sqrt{q}}{d(d-1)},
$$
or equivalently,
\begin{equation}\label{ineq:contradictory}
    d(d-1)^2(d-2)\sqrt{q} + \frac{1}{2}d^3(d-1)^3 + d(d-1) > q + \sqrt{q}
\end{equation}
Since $\sqrt{q}\geq d(d-1)^3$, we have $\sqrt{q}\geq \frac{1}{2} d^2(d-1)$ which allows us to deduce,
\begin{align*}
    q + \sqrt{q} &\geq d(d-1)^2 \cdot \left((d-1)\sqrt{q}\right) + \sqrt{q} \\
    &\geq d(d-1)^2 \cdot \left((d-2)\sqrt{q}+\sqrt{q}\right) + d(d-1) \\
    &\geq d(d-1)^2 \cdot \left((d-2)\sqrt{q} + \tfrac{1}{2}d^2(d-1)\right) + d(d-1) \\
    &=d(d-1)^2 (d-2)\sqrt{q} + \frac{1}{2}d^3(d-1)^3 + d(d-1)
\end{align*}
contradicting \eqref{ineq:contradictory}. We conclude that $C^{\ast}$ is not a blocking curve, which means that $C$ is not tangent-filling.

When $C$ is irreducible but not geometrically irreducible, we know that $\# C(\Fq)\leq \frac{d^2}{4}$, so we apply the same argument (with $p$ replaced with $q$ everywhere) at the end of the proof of Theorem~\ref{thm:low-degree-p}. We conclude that $C$ is still not tangent-filling. 
 \end{proof}

 \begin{remark}\label{rem:kaji}
     Kaji~\cite{Kaj89} proved that the Gauss map of a smooth plane curve over $\overline{\mathbb{F}_q}$ must be purely inseparable. Consequently, a smooth plane curve must have finitely many bitagents. Moreoever, only smooth strange curves are conics in characteristic $2$. These observations together tell us that Theorem~\ref{thm:low-degree-q} holds for smooth curves even when the hypothesis $p\geq d$ is removed as long as $d\geq 3$. 
 \end{remark}

\section{Explicit examples of tangent-filling curves} \label{sect:family-curves}

We start with an example of a plane curve of degree $\sqrt{q}+1$ which is tangent-filling over $\Fq$ when $q$ is a square.

\begin{example}\label{example:hermitian} Let $q$ be a prime power such that $q$ is a square. The curve $\mathcal{H}_q$ defined by
$$
x^{\sqrt{q}+1} + y^{\sqrt{q}+1} + z^{\sqrt{q}+1} = 0
$$
is tangent-filling over $\mathbb{F}_q$. The curve $\mathcal{H}_q$ is known as the \emph{Hermitian curve} in the literature. It can be checked that $\mathcal{H}_q$ has exactly $(\sqrt{q})^3+1$ distinct $\Fq$-points. Moreover, the set $\mathcal{H}_q(\Fq)$ forms a \emph{unital} in the sense of combinatorial geometry, meaning that the points can be arranged into subsets of size $\sqrt{q}+1$ so that any two points of $\mathcal{H}_q(\Fq)$ lie in a unique subset. In particular, it can be shown that every $\Fq$-line meets $\mathcal{H}_q(\Fq)$ in either $1$ or $\sqrt{q}+1$ points \cite{BE08}*{Theorem 2.2}. As a result, $\mathcal{H}_q$ is a blocking curve over $\Fq$. 

To show that $\mathcal{H}_q$ is a tangent-filling curve, we let $P_0=[a:b:c]$ to be a point in $\mathbb{P}^2(\Fq)$. We are searching for a point $Q=[x_0:y_0:z_0]\in \mathcal{H}_q(\Fq)$ such that $T_Q(C)$ contains $P_0$. This is equivalent to finding $[x_0:y_0:z_0]\in \mathcal{H}_q(\Fq)$ such that
\begin{equation}\label{eq:Hermitian-blocking-1}
    x_0^{
    \sqrt{q}} a + y_0^{\sqrt{q}} b + z_0^{\sqrt{q}} c = 0.
\end{equation}
Note that the map $[x:y:z]\mapsto [x^{\sqrt{q}}:y^{\sqrt{q}}:z^{\sqrt{q}}]$ is a bijection on the set $\mathbb{P}^2(\Fq)$, and therefore also on $\mathcal{H}_q(\Fq)$ because $\mathcal{H}_q(\Fq)$ is defined over $\Fq$. Thus, there exists $[x_1:y_1:z_1]\in \mathcal{H}_q(\Fq)$ with the property that $$[x_0:y_0:z_0]=\left[x_1^{\sqrt{q}}:y_1^{\sqrt{q}}:z_1^{\sqrt{q}}\right].$$ 
In other words, it suffices to find $[x_1:y_1:z_1]\in \mathcal{H}_q(\Fq)$ such that
\begin{equation}\label{eq:Hermitian-blocking-2}
    x_1^{q} a + y_1^{q} b + z_1^{q} c = 0.
\end{equation}
Since $x_1, y_1, z_1$ are elements of $\Fq$, we see that \eqref{eq:Hermitian-blocking-2} is equivalent to
\begin{equation}\label{eq:Hermitian-blocking-3}
x_1 a + y_1 b + z_1 c = 0.
\end{equation}
Let $L$ be the $\Fq$-line defined by $ax+by+cz=0$. Since $\mathcal{H}_q(\Fq)$ is a blocking set, the equation~\eqref{eq:Hermitian-blocking-3} is satisfied for some $Q=[x_1:y_1:z_1]\in\mathcal{H}_q(\Fq)$, as claimed. This argument also shows that the dual of the Hermitian curve is isomorphic to itself. \end{example}

For the remainder of the paper, we will focus on the curve $C$ defined by the equation
\begin{equation}\label{eq:special-curve}
    x^{q-1} + y^{q-1} + z^{q-1} - 3(x+y+z)^{q-1} = 0.
\end{equation}
Unless otherwise stated, we will assume that $p=\operatorname{char}(\Fq)>3$.

We will study the curve $C$ by first finding the singular points, and then checking that $C$ is irreducible. Finally, we will prove that $C$ is tangent-filling, establishing Theorem~\ref{thm:special-curve}.

\subsection{Rational points of the curve.} We start by finding all the $\Fq$-points on $C$. 

\begin{lemma}\label{lem:rat-points}
The set $C(\Fq)$ is equal to the set of all points $[x:y:z]\in\mathbb{P}^2(\Fq)$ such that
$$
xyz(x+y+z)\neq 0.
$$
\end{lemma}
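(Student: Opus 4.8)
The plan is to exploit the fact that over $\Fq$ the $(q-1)$-st power map collapses every element to $0$ or $1$: for $t\in\Fq$, Fermat's little theorem gives $t^{q-1}=1$ when $t\neq 0$ and $t^{q-1}=0$ when $t=0$. This converts the defining equation of $C$ into a purely combinatorial condition recording which of the four linear forms $x$, $y$, $z$, $x+y+z$ vanish at the point.

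First I would fix a point $[x:y:z]\in\mathbb{P}^2(\Fq)$ and set $a=x^{q-1}$, $b=y^{q-1}$, $c=z^{q-1}$, and $s=(x+y+z)^{q-1}$, each lying in $\{0,1\}\subseteq\Fq$ by the observation above, with $a=1$ if and only if $x\neq 0$, and similarly for $b$, $c$, $s$. Substituting into \eqref{eq:special-curve}, the point lies on $C$ if and only if $a+b+c-3s=0$ in $\Fq$.

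The key step, and the only place the hypothesis $p>3$ is used, is to lift this identity from $\Fq$ to $\mathbb{Z}$. Since $a+b+c\in\{0,1,2,3\}$ and $3s\in\{0,3\}$, the integer $a+b+c-3s$ lies in $\{-3,\dots,3\}$; as $p\geq 5$, the only multiple of $p$ in this range is $0$, so the $\Fq$-equation forces $a+b+c-3s=0$ as integers. A short case analysis on $s$ then concludes: if $s=0$ then $a+b+c=0$, forcing $x=y=z=0$, which is not a point of $\mathbb{P}^2$; hence $s=1$, and then $a+b+c=3$ forces $a=b=c=1$. Therefore $[x:y:z]\in C(\Fq)$ exactly when $x$, $y$, $z$, and $x+y+z$ are all nonzero, i.e. $xyz(x+y+z)\neq 0$.

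I do not expect a serious obstacle here; the lemma is essentially a careful bookkeeping argument. The one delicate point is the passage from the $\Fq$-equation to the integer equation, which is precisely where $p>3$ is indispensable. In characteristic $2$ or $3$ the congruence $a+b+c\equiv 3s$ acquires spurious solutions (for instance $a+b+c=3$, $s=0$ when $p=3$, corresponding to $x,y,z\neq 0$ but $x+y+z=0$), which explains both why those characteristics are excluded and why the curve behaves differently there, consistent with Remark~\ref{rem:alt_char}.
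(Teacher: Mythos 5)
Your proof is correct and follows essentially the same approach as the paper, which simply remarks that the conclusion is clear from the fact that $t^{q-1}\in\{0,1\}$ for $t\in\Fq$. Your write-up usefully spells out the detail the paper leaves implicit — lifting $a+b+c-3s=0$ from $\Fq$ to $\mathbb{Z}$ using $p>3$ — and correctly identifies this as the precise point where the characteristic hypothesis enters, consistent with Remark~\ref{rem:alt_char}.
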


\begin{proof}
    Since $x^{q-1} = 1$ holds for every $x\in\Fq^{\ast}$, the conclusion is clear from \eqref{eq:special-curve}.
\end{proof}

\begin{corollary}
\label{cor:non-blocking}
The curve $C$ is not blocking.
\end{corollary}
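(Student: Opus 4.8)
The plan is to exhibit a single $\mathbb{F}_q$-line that $C(\mathbb{F}_q)$ fails to meet in an $\mathbb{F}_q$-point. Producing one such line already suffices to refute the blocking property, since being blocking requires $C(\mathbb{F}_q)$ to intersect \emph{every} $\mathbb{F}_q$-line at some $\mathbb{F}_q$-point.

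The key input is Lemma~\ref{lem:rat-points}, which pins down $C(\mathbb{F}_q)$ very tightly: every $\mathbb{F}_q$-point of $C$ satisfies $xyz(x+y+z)\neq 0$, so in particular each of the four linear forms $x$, $y$, $z$, and $x+y+z$ is nonzero at any such point. First I would take the $\mathbb{F}_q$-line $L$ defined by $x=0$. Every $\mathbb{F}_q$-point of $L$ has vanishing first coordinate, hence makes the product $xyz(x+y+z)$ equal to zero; by Lemma~\ref{lem:rat-points} none of these points lies on $C$. Therefore $L(\mathbb{F}_q)\cap C(\mathbb{F}_q)=\emptyset$, and $C$ is not blocking.

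There is essentially no obstacle to overcome, as the substantive arithmetic was already carried out in Lemma~\ref{lem:rat-points}; the corollary is just a direct reading of that description of $C(\mathbb{F}_q)$. The only remark worth recording is that the identical argument applies verbatim to the lines $y=0$, $z=0$, and $x+y+z=0$, so $C(\mathbb{F}_q)$ in fact avoids four distinct $\mathbb{F}_q$-lines, each of which independently witnesses the failure of the blocking property.
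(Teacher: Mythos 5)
Your proof is correct and matches the paper's argument essentially verbatim: the paper also invokes Lemma~\ref{lem:rat-points} to show that $C(\mathbb{F}_q)$ misses an entire coordinate line (the paper uses $z=0$ where you use $x=0$, an immaterial difference given the symmetry of the curve).
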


\begin{proof} 
Consider the $\Fq$-line $L=\{z=0\}$. Then $C\cap L$ has no $\Fq$-points due to the condition in Lemma~\ref{lem:rat-points}. Thus, $C(\Fq)$ is not a blocking set. 
\end{proof}

\subsection{Singular points of the curve}

Our goal is to determine the singular points of the curve $C$ over $\overline{\mathbb{F}_q}$.

\begin{proposition}\label{prop:singular-points}
The curve $C$ has only one singular point, namely $[1:1:1]$.
\end{proposition}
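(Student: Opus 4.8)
The plan is to find the singular points of $C$ by computing the partial derivatives of $F(x,y,z) = x^{q-1} + y^{q-1} + z^{q-1} - 3(x+y+z)^{q-1}$ and solving the system where they all vanish simultaneously (together with $F=0$). Since $p > 3$, differentiation behaves well, and we compute
\begin{align*}
F_x &= (q-1)x^{q-2} - 3(q-1)(x+y+z)^{q-2} = (q-1)\left(x^{q-2} - 3(x+y+z)^{q-2}\right),\\
F_y &= (q-1)\left(y^{q-2} - 3(x+y+z)^{q-2}\right),\\
F_z &= (q-1)\left(z^{q-2} - 3(x+y+z)^{q-2}\right).
\end{align*}
Because $p>3$ divides neither $q-1$ nor $3$, the condition $F_x = F_y = F_z = 0$ reduces to the three equations $x^{q-2} = y^{q-2} = z^{q-2} = 3(x+y+z)^{q-2}$.

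First I would argue that at a singular point none of $x,y,z$ can vanish. Indeed, if say $z = 0$ at a singular point, then $F_z = 0$ forces $3(x+y)^{q-2} = 0$, so $x+y = 0$; but then $x+y+z = 0$, and $F_x = 0$ gives $x^{q-2} = 0$, i.e. $x = 0$, and similarly $y = 0$, contradicting $[x:y:z]$ being a projective point. (One should also separately rule out $x+y+z = 0$: if the sum vanishes, then $F_x = F_y = F_z = 0$ force $x = y = z = 0$, again impossible.) Hence at any singular point we have $xyz(x+y+z) \neq 0$, which is consistent with Lemma~\ref{lem:rat-points} and lets us divide freely.

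With all coordinates nonzero, the next step is to exploit $x^{q-2} = y^{q-2} = z^{q-2}$. Writing $s = x+y+z$, each coordinate satisfies $t^{q-2} = 3 s^{q-2}$, i.e. $t^{q-2}$ takes the same value for $t \in \{x,y,z\}$. Since $t \neq 0$, this means $(x/y)^{q-2} = 1$, so $x/y$ is a $(q-2)$-th root of unity in $\overline{\mathbb{F}_q}^{\ast}$, and likewise for the other ratios; I then need to combine this with the defining equation $F = 0$ to pin down the ratios exactly. The cleanest route is to set $u = x^{q-2} = y^{q-2} = z^{q-2} = 3s^{q-2}$ and rewrite $F=0$ using $x^{q-1} = x \cdot x^{q-2} = xu$, etc., obtaining $xu + yu + zu - 3 s \cdot s^{q-2} = su - s u = 0$, which is automatically satisfied and so imposes no further constraint. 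Thus the singular locus is cut out purely by $x^{q-2}=y^{q-2}=z^{q-2}=3(x+y+z)^{q-2}$; the task is to show its only projective solution is $[1:1:1]$.

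The main obstacle, and the heart of the argument, is therefore the purely algebraic claim that the system $x^{q-2} = y^{q-2} = z^{q-2} = 3(x+y+z)^{q-2}$ has $[1:1:1]$ as its unique projective solution. The point $[1:1:1]$ clearly works since then $x+y+z = 3$ and $x^{q-2} = 1 = 3 \cdot 3^{q-2}$ reduces to $3^{q-1} = 1$, true as $3 \in \mathbb{F}_q^{\ast}$. To prove uniqueness, I would normalize using the scaling action, say setting one coordinate to $1$, and analyze how many $(q-2)$-th roots of unity can simultaneously satisfy the constraint tying the ratios to the sum $s$; the equation $3 s^{q-2} = 1$ (after normalizing $x^{q-2}=1$) forces $s^{q-2}$ to a fixed value while $s = x+y+z$ is constrained by the root-of-unity conditions on $x,y,z$, and the delicate combinatorial-arithmetic count of how these can coincide is where the real work lies. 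I expect the cleanest finish to come from substituting $x = 1$ (WLOG), writing $y, z$ as $(q-2)$-th roots of unity, and showing the sum condition $3(1+y+z)^{q-2} = 1$ forces $y = z = 1$.
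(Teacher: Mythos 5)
Your setup is sound and matches the paper's opening moves: the Jacobian conditions $x^{q-2}=y^{q-2}=z^{q-2}=3(x+y+z)^{q-2}$, the deduction that $xyz(x+y+z)\neq 0$ at any singular point, and the observation (essentially Euler's relation) that $F=0$ holds automatically once all partials vanish are all correct. But the proof stops exactly where the proposition's actual content begins. You reduce everything to the claim that, after normalizing one coordinate to $1$, the conditions $y^{q-2}=z^{q-2}=1$ and $3(1+y+z)^{q-2}=1$ force $y=z=1$, and then you only say that this ``delicate combinatorial-arithmetic count'' is ``where the real work lies'' and that you ``expect'' a clean finish. That claim \emph{is} the proposition; no argument is given for it, so what you have is a plan, not a proof. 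Moreover, the route you gesture at --- treating $y,z$ as abstract $(q-2)$-th roots of unity and counting --- is unlikely to close the gap: since $p\nmid q-2$ there are $q-2$ such roots of unity, they range over large extension fields of $\Fq$, and a bare multiplicative root-of-unity count has no purchase on the additive condition involving $1+y+z$.

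The idea you are missing is to linearize via Frobenius. From $t^{q-2}=1$ one gets $t^{q}=t^{2}$ for each such coordinate $t$, and raising the sum to the $q$-th power gives $(1+x_0+y_0)^q = 1+x_0^q+y_0^q = 1+x_0^2+y_0^2$ (in the paper's normalization $z_0=1$). Substituting this into $3(1+x_0+y_0)^{q-2}=1$ yields the low-degree relation $3\left(1+x_0^2+y_0^2\right)=(1+x_0+y_0)^2$, i.e.\ a conic. Solving the resulting quadratic for $y_0$ introduces $\gamma$ with $\gamma^2=-3$, and a second application of $y_0^q=y_0^2$, combined with the Galois action on $\gamma$ (namely $\gamma^q=\gamma$ if $\gamma\in\Fq$ and $\gamma^q=-\gamma$ otherwise), pins down $x_0$. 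The case $\gamma\in\Fq$ gives $(x_0-1)^2=0$, hence the point $[1:1:1]$; the case $\gamma\notin\Fq$ produces two candidate points, but both satisfy $x_0+y_0+1=0$ and are therefore excluded by the non-vanishing condition you correctly established at the outset. Without this Frobenius reduction and the ensuing case analysis, the uniqueness assertion --- the heart of the proposition --- remains unproven.
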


\begin{proof}
By looking at the partial derivatives of the defining polynomial in \eqref{eq:special-curve}, any singular point $[x_0:y_0:z_0]$ of $C$ must satisfy,
\begin{equation}\label{eq:jacobian-condition}
    x_0^{q-2}=y_0^{q-2}=z_0^{q-2}=3(x_0+y_0+z_0)^{q-2}.
\end{equation}
In particular, any singular point $[x_0:y_0:z_0]\in C(\overline{\Fq})$ satisfies:
\begin{equation}
\label{eq:rat-points}
x_0y_0z_0(x_0+y_0+z_0)\ne 0.
\end{equation}
So, without loss of generality, we may assume that $z_0=1$. Thus, a potential singular point takes the form $[x_0:y_0:1]$ and satisfies $x_0y_0\neq 0$ by equation~\eqref{eq:rat-points}. Applying \eqref{eq:jacobian-condition}, we get
\begin{equation}\label{eq:sing-points-1}
x_0^{q-2} = y_0^{q-2} = 3(x_0+y_0+1)^{q-2} = 1.
\end{equation}
We begin by computing the expression $(x_0+y_0+1)^{q-2}$,
\begin{equation}\label{eq:sing-points-2}
(x_0+y_0+1)^{q-2} = \frac{(x_0+y_0+1)^{q}}{(x_0+y_0+1)^2} = \frac{1+x_0^q + y_0^q}{(1+x_0+y_0)^2}.
\end{equation}
The two equations \eqref{eq:sing-points-1} and \eqref{eq:sing-points-2} together give,
\begin{equation}\label{eq:sing-points-3}
\frac{3+3x_0^2+3y_0^2}{(1+x_0+y_0)^2} = 1.
\end{equation}
We can rearrange \eqref{eq:sing-points-3} into
$$
x_0^2+y_0^2 - x_0 y_0 - x_0 - y_0 + 1 = 0
$$
which can be expressed as a degree $2$ equation in $y_0$:
$$
y_0^2 - y_0(x_0+1) + x_0^2-x_0+1 = 0.
$$
Solving for $y_0$, we obtain
\begin{equation}\label{eq:sing-points-4}
y_0 = \frac{x_0+1 + (x_0-1)\gamma}{2}
\end{equation}
where $\gamma$ satisfies $\gamma^2 = -3$. We compute $y_0^q$ using \eqref{eq:sing-points-4}:
\begin{equation}\label{eq:sing-points-5}
y_0^q = \frac{x_0^q+1 + (x_0^q-1)\gamma^q}{2}.
\end{equation}
We also compute $y_0^2$ using \eqref{eq:sing-points-4}:
\begin{equation*}
y_0^2 = \frac{(x_0^2+2x_0+1)+2(x_0+1)(x_0-1)\gamma +  (x_0^2-2x_0+1)\cdot (-3)}{4}
\end{equation*}
which simplifies to:
\begin{equation}\label{eq:sing-points-6}
y_0^2 = \frac{-x_0^2 + 4x_0 - 1 + (x_0^2-1)\gamma}{2}.
\end{equation}
Since $y_0^{q-2}=1$ by \eqref{eq:sing-points-1}, we know that $y_0^q = y_0^{2}$. Equating \eqref{eq:sing-points-5} and \eqref{eq:sing-points-6},
\begin{equation}\label{eq:sing-points-7}
 \frac{-x_0^2 + 4x_0 - 1 + (x_0^2-1)\gamma}{2} = \frac{x_0^q+1 + (x_0^q-1)\gamma^q}{2}.
\end{equation}
We proceed by analyzing two cases, depending on whether $\gamma\in\Fq$ or $\gamma\notin\Fq$.

\textbf{Case 1.} $\gamma\in\Fq$.

In this case, we have $\gamma^q = \gamma$. Using $x_0^{q}=x_0^{2}$ which is implied by \eqref{eq:sing-points-1}, the equation~\eqref{eq:sing-points-7} yields,
\begin{equation*}
 \frac{-x_0^2 + 4x_0 - 1 + (x_0^2-1)\gamma}{2} = \frac{x_0^2+1 + (x_0^2-1)\gamma}{2}.
\end{equation*}
which simplifies to $(x_0-1)^2=0$, and so $x_0=1$. Using \eqref{eq:sing-points-4}, we obtain $y_0=1$ as well. This results in the singular point $[1:1:1]$ of the curve $C$.

\textbf{Case 2.} $\gamma\notin\Fq$.

In this case, $\gamma\in\mathbb{F}_{q^2}\setminus \mathbb{F}_q$ because  $\gamma^2=-3$. Since $\gamma^{q}$ is the Galois conjugate of $\gamma$, we have $\gamma^{q}=-\gamma$. Thus, \eqref{eq:sing-points-7} yields,
\begin{equation*}
\frac{-x_0^2 + 4x_0 - 1 + (x_0^2-1)\gamma}{2} = \frac{x_0^q+1 - (x_0^q-1)\gamma}{2}.
\end{equation*}
This simplifies (due to $x_0^q=x_0^2$) to,
$$
(x_0-1)^2 = (x_0^2-1)\gamma.
$$
We can eliminate the case $x_0=1$ because that will only bring us back to the singular point $[1:1:1]$ already analyzed in the previous case. After dividing both sides of the preceding equation by $x_0-1$, and solving for $x_0$, we get
\begin{equation}\label{eq:formula-x_0-1}
x_0 = \frac{1+\gamma}{1-\gamma}.
\end{equation}
Using the relation $\gamma^2=-3$, the formula \eqref{eq:formula-x_0-1} simplifies to,
\begin{equation}\label{eq:formula-x_0-2}
x_0 = \frac{\gamma-1}{2}.
\end{equation}
Applying \eqref{eq:sing-points-4}, we obtain
\begin{equation}\label{eq:formula-y_0}
y_0 = \frac{-\gamma-1}{2}.
\end{equation}
Since $\gamma^2=-3$, we have two solutions (once $\gamma$ is chosen, $-\gamma$ is also a solution). Thus, \eqref{eq:formula-x_0-2} and \eqref{eq:formula-y_0} allow us to conclude that there are two \emph{potential} singular points on the curve $C$:
\begin{equation*}
 \left[\frac{\gamma-1}{2} : \frac{-\gamma-1}{2}: 1 \right] \quad \text{and} \quad  \left[\frac{-\gamma-1}{2} : \frac{\gamma-1}{2}: 1 \right]
\end{equation*}
However, both of these points above satisfy $x_0+y_0+1=0$. By equation~\eqref{eq:rat-points}, none of these two points is singular on the curve $C$. 

We conclude that Case 2 does not occur after all, and the point $[1:1:1]$ is the unique singular point of $C$. \end{proof}

\subsection{Irreducibility of the curve} We begin with a general irreducibility criterion for a plane curve of degree at least $3$ with a unique singular point.

\begin{lemma}\label{lem:general-irreducibility-criterion}
Suppose that $D=\{F=0\}$ is a plane curve defined over a field $K$ with $\deg(F)\geq 3$ and a unique singular point $P_0\in D(\overline{K})$. After dehomogenizing $f(x,y)\colonequals F(x, y, 1)$ and applying translation, we may assume that $(0, 0)$ is the singular point of the affine curve $\{f=0\}$. Assume that the quadratic term $A_2(x, y)$ in the expansion of $f$ around $(0, 0)$ cannot be written as $L(x,y)^2$ for some $L(x,y)\in \overline{K}[x,y]$ (in other words, the equation $A_2(x,y)=0$ has precisely two solutions in $\mathbb{P}^1(\overline{K})$). Then the plane curve $D$ is irreducible over $\overline{K}$.
\end{lemma}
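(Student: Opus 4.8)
The plan is to argue by contradiction. Assume $D$ is reducible over $\overline{K}$, so that $F = G\cdot H$ for non-constant homogeneous polynomials $G, H\in\overline{K}[x,y,z]$ with $\deg G = a\geq 1$, $\deg H = b\geq 1$, and $a+b=d$. The key structural observation is that every point of $G\cap H$ is a singular point of $D$: if $P$ lies on both $G$ and $H$, then $\nabla F = H\,\nabla G + G\,\nabla H$ vanishes at $P$. Since $P_0$ is the unique singular point, this forces $G\cap H = \{P_0\}$. In particular $G$ and $H$ share no common component (otherwise the singular locus of $D$ would be one-dimensional, contradicting uniqueness of $P_0$), so Bézout's theorem applies and yields $I_{P_0}(G,H) = ab$, with all $ab$ intersection points concentrated at $P_0$.

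Next I would carry out a local analysis at $P_0$, which by hypothesis sits at $(0,0)$ in the chosen affine chart. Since $P_0\in D$ is singular, the expansion $f = A_2 + A_3 + \cdots$ of $f=F(x,y,1)$ has neither a constant nor a linear term; because $A_2\neq 0$ (it has precisely two distinct roots in $\mathbb{P}^1(\overline{K})$), the multiplicity of $D$ at $P_0$ is exactly $2$ and its tangent cone is the pair of distinct lines cut out by $A_2$. Writing $g=G(x,y,1)$ and $h=H(x,y,1)$, multiplicities add: $\operatorname{mult}_{P_0}(g) + \operatorname{mult}_{P_0}(h) = 2$. I first rule out the unbalanced split $\{2,0\}$: a factor of multiplicity $0$ at $P_0$ does not pass through $P_0$, which is impossible since Bézout forces $G$ and $H$ to meet and their only common point is $P_0$. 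Hence $\operatorname{mult}_{P_0}(g) = \operatorname{mult}_{P_0}(h) = 1$, so both components are smooth at $P_0$ with well-defined tangent lines $\ell_G$ and $\ell_H$.

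Finally, the tangent cone $A_2$ of $D$ at $P_0$ is the product $\ell_G\cdot\ell_H$ of the lowest-degree parts; since $A_2$ is not a perfect square, $\ell_G\neq\ell_H$, so $G$ and $H$ cross transversally at $P_0$ and $I_{P_0}(G,H)=1$. Combined with $I_{P_0}(G,H)=ab$ from the first step, this gives $ab=1$, forcing $a=b=1$ and $d=2$, contradicting $d\geq 3$. I expect the main technical point to be the justification that transversality of the two smooth branches yields intersection multiplicity exactly $1$ (rather than merely $\geq 1$), together with the verification that all $ab$ Bézout intersections genuinely collapse onto the single node $P_0$; once these are pinned down, the numerical contradiction is immediate.
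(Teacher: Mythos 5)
Your proposal is correct and follows essentially the same route as the paper's proof: B\'ezout's theorem concentrates all $ab$ intersection points of the two putative factors at the unique singular point $P_0$, while the non-square tangent cone $A_2$ forces both factors to be smooth at $P_0$ with distinct tangent lines, so they meet there with intersection multiplicity exactly $1$, a contradiction. The differences are only cosmetic: you make explicit a few points the paper leaves implicit (the no-common-component hypothesis for B\'ezout and ruling out the $\{2,0\}$ multiplicity split), and you phrase the final contradiction as $ab=1\Rightarrow d=2$ rather than as multiplicity $\geq d_1d_2\geq 2$ against multiplicity $1$.
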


\begin{proof}
Since $(0, 0)$ is a singular point of $\{f=0\}$, we can then express
\begin{equation*}
f(x,y) = A_2(x,y) + A_3(x,y) + \ldots
\end{equation*}
where $A_i(x,y)$ is a homogeneous polynomial of degree $i$ in $x$ and $y$. By hypothesis, $A_2(x,y)$ splits over $\overline{K}$ as a product $L_1(x,y)\cdot L_2(x,y)$ of two distinct nonzero linear forms. If $f(x, y)=g(x, y)\cdot h(x, y)$ where $g(0,0)=h(0,0)=0$, then we claim that the component curves $\{g=0\}$ and $\{h=0\}$ meet at the point $(0,0)$ with multiplicity $1$. Indeed, the expansions of $g(x,y)$ and $h(x,y)$ around the origin $(0,0)$ must necessarily take the form (after multiplication by a suitable nonzero constant):
\begin{align*}
g(x,y) = L_1(x,y) + B_2(x,y) + B_3(x,y) + \ldots 
\end{align*}
and
\begin{align*}
h(x,y) = L_2(x,y) + C_2(x,y) + C_3(x,y) + \ldots
\end{align*}
respectively, where $B_i(x,y)$ and $C_i(x,y)$ are homogeneous polynomials of degree $i$ in
$x$ and $y$. Since $L_1(x,y)$ and $L_2(x,y)$ are distinct linear forms which generate the maximal ideal of $\overline{K}[x,y]$ at $(0,0)$, then the two curves $\{g=0\}$ and $\{h=0\}$ meet with multiplicity $1$ at $(0,0)$.

We show that the plane curve $D=\{F=0\}$ is irreducible over $\overline{K}$. Assume, to the contrary, that $F=G\cdot H$ for some homogeneous polynomials $G$ and $H$ with positive degrees $d_1$ and $d_2$, respectively. Let $g(x,y)\colonequals G(x, y, 1)$ and $h(x, y)\colonequals H(x, y, 1)$. After applying B\'ezout's theorem, $d_1 d_2$ intersection points (counted with multiplicity) of $\{G=0\}$ and $\{H=0\}$ must be singular points of $D$. Since $D$ has a unique singular point, namely $(0, 0)$ in the affine chart $z=1$, the local intersection multiplicity at the origin must be at least $d_1 d_2 \geq 2$.  This contradicts the fact that $\{g=0\}$ and $\{h=0\}$ meet with multiplicity exactly $1$ at $(0, 0)$.
\end{proof}

\begin{proposition}\label{prop:irreducibility-of-special-curve}
The curve $C$ defined by \eqref{eq:special-curve} is geometrically irreducible.
\end{proposition}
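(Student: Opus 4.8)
The plan is to apply the irreducibility criterion of Lemma~\ref{lem:general-irreducibility-criterion} to the curve $C$. By Proposition~\ref{prop:singular-points}, the unique singular point of $C$ is $[1:1:1]$, so the hypothesis that $C$ has exactly one singular point is satisfied, and the degree condition $\deg(F)=q-1\geq 3$ holds for $q\geq 11$. Thus the entire task reduces to computing the quadratic term $A_2(x,y)$ in the local expansion of the defining polynomial around the singular point and verifying that $A_2$ is not a perfect square of a linear form.

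First I would dehomogenize by setting $f(x,y)\colonequals F(x,y,1) = x^{q-1}+y^{q-1}+1 - 3(x+y+1)^{q-1}$, and then translate the singular point $[1:1:1]$ to the origin by substituting $x\mapsto x+1$, $y\mapsto y+1$. The key observation is that the lowest-order terms of the Taylor expansion around the singular point come entirely from the degree-$2$ parts of each power function, since the constant and linear terms must vanish (the point is on the curve and is singular). For a function like $(x+1)^{q-1}$, the degree-$2$ coefficient in the expansion about $x=0$ is $\binom{q-1}{2}$; working in characteristic $p>3$ with $\binom{q-1}{2}\equiv \binom{-1}{2}=1 \pmod p$, I expect the quadratic contribution from $x^{q-1}$ alone to be simply $x^2$, and similarly $y^2$ from the $y^{q-1}$ term. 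For the cross term $-3(x+y+1)^{q-1}$, the quadratic part of $(x+y+1)^{q-1}$ expanded about the origin is $\binom{q-1}{2}(x+y)^2 = (x+y)^2 = x^2+2xy+y^2$.

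Assembling these contributions, I expect
\begin{equation*}
A_2(x,y) = x^2 + y^2 - 3(x^2 + 2xy + y^2) = -2x^2 - 6xy - 2y^2 = -2(x^2 + 3xy + y^2).
\end{equation*}
To conclude, I must check that $x^2+3xy+y^2$ is not a perfect square, equivalently that its discriminant $3^2 - 4\cdot 1\cdot 1 = 5$ is nonzero in $\mathbb{F}_q$; since $p>3$, this fails only if $p=5$. In that case one checks $5\equiv 0$ would force a double root, so I would handle $p=5$ separately by a direct computation of $A_2$ (or note that $x^2+3xy+y^2=x^2-2xy+y^2=(x-y)^2$ in characteristic $5$, which would require a different argument). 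Apart from this characteristic-$5$ subtlety, the conclusion then follows immediately from Lemma~\ref{lem:general-irreducibility-criterion}.

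The main obstacle I anticipate is the careful bookkeeping of the binomial coefficients modulo $p$ and ensuring that no higher-order cancellation or characteristic-specific degeneracy spoils the computation of $A_2$; in particular, the edge case $p=5$ where the discriminant vanishes must be addressed, possibly by a supplementary direct argument showing $C$ is irreducible in that characteristic as well, or by verifying that the stated range $q\geq 11$ together with $p>3$ still yields a genuinely reducible factorization structure only when it should not.
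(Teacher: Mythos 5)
Your overall strategy is exactly the paper's: combine Proposition~\ref{prop:singular-points} with Lemma~\ref{lem:general-irreducibility-criterion} and check that the quadratic term at $[1:1:1]$ is not a square of a linear form. However, your computation of $A_2$ contains an error that then creates a phantom exceptional case which you leave unresolved. After the translation $x\mapsto x+1$, $y\mapsto y+1$, the third term becomes $-3\bigl((x+1)+(y+1)+1\bigr)^{q-1}=-3(x+y+3)^{q-1}$: the quantity inside the power is based at $3$, not at $1$. Its quadratic part is therefore
\begin{equation*}
-3\binom{q-1}{2}3^{q-3}(x+y)^2=-3^{q-2}(x+y)^2=-\tfrac{1}{3}(x+y)^2,
\end{equation*}
using $3^{q-1}=1$ in $\mathbb{F}_q$ (valid since $p>3$), not $-3(x+y)^2$ as you wrote. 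The correct quadratic form is
\begin{equation*}
A_2(x,y)=x^2+y^2-\tfrac{1}{3}(x+y)^2=\tfrac{2}{3}\left(x^2-xy+y^2\right),
\end{equation*}
which agrees with the paper's expression $2\cdot 3^{q-2}\bigl[(x-1)^2-(x-1)(y-1)+(y-1)^2\bigr]$. Its discriminant is $-3\neq 0$ for all $p>3$, so the hypothesis of Lemma~\ref{lem:general-irreducibility-criterion} holds uniformly and no characteristic-specific case analysis is needed.

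Because of the mis-expansion you arrived at $-2(x^2+3xy+y^2)$, whose discriminant $5$ vanishes when $p=5$, and you then deferred that case to ``a supplementary direct argument'' that is never supplied. As written, this is a genuine gap: your proof does not cover $p=5$ (e.g.\ $q=25$), which is allowed by the hypotheses of Theorem~\ref{thm:special-curve}. The gap disappears once the expansion is done correctly --- the $p=5$ difficulty is an artifact of the arithmetic slip, not a real feature of the curve.
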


\begin{proof}
By Proposition~\ref{prop:singular-points}, the curve $C$ has the unique singular point $[1:1:1]$. Expanding the equation $x^{q-1}+y^{q-1}+1-3(x+y+1)^{q-1}=0$ around the point $(1, 1)$, we are led to analyze:
\begin{equation*}
(1 + (x-1))^{q-1} + (1 + (y-1))^{q-1} + 1 - 3(3 + (x-1) + (y-1))^{q-1} 
\end{equation*}
After expanding, the first nonzero homogeneous form in $(x-1)$
and $(y-1)$ has degree $2$, and is given by:
\begin{equation*}
2\cdot 3^{q-2}\cdot \left[(x-1)^2 - (x-1)(y-1) + (y-1)^2 \right].
\end{equation*}
Since the discriminant of the quadratic $s^2-st+t^2$ is $-3\neq 0$ in $\Fq$, the hypothesis of Lemma~\ref{lem:general-irreducibility-criterion} is satisfied. Thus, $C$ is irreducible over $\overline{\Fq}$.
\end{proof}

\subsection{Tangent-filling property} In this final subsection, we give the proof that the curve $C$ defined by \eqref{eq:special-curve} is tangent-filling over $\Fq$. 

\begin{proof}[Proof of Theorem~\ref{thm:special-curve}] Let $P=[a_0:b_0:c_0]$ be an arbitrary point in $\bP^2(\Fq)$. We want to find a smooth $\Fq$-point $Q=[x_0:y_0:z_0]$ of $C$ such that $P$ is contained in the tangent line $T_Q C$. From Lemma~\ref{lem:rat-points}, we know that an $\Fq$-point $[x_0:y_0:z_0]$ is a point on the curve $C$ if and only if 
\begin{equation}\label{eq:rat-point-condition}
x_0y_0z_0(x_0+y_0+z_0)\neq 0
\end{equation}
Note that $P$ is contained in the tangent line $T_Q C$ if and only if
\begin{equation}\label{eq:tangent-condition}
a_0\cdot \left(3 s_0^{q-2}-x_0^{q-2}\right)+ b_0\cdot \left(3s_0^{q-2}-y_0^{q-2}\right)+ c_0\cdot \left(3s_0^{q-2} - z_0^{q-2}\right)=0
\end{equation}
where $s_0=x_0+y_0+z_0$. Using the fact that $s^{q-1}=1$ for each $s\in\Fq^{\ast}$, we rewrite \eqref{eq:tangent-condition} as
\begin{equation}\label{eq:tangent-condition-fraction}
\frac{3(a_0+b_0+c_0)}{x_0+y_0+z_0}=\frac{a_0}{x_0}+\frac{b_0}{y_0} +\frac{c_0}{z_0}.
\end{equation}
Note that all the denominators in \eqref{eq:tangent-condition-fraction} are nonzero because Lemma~\ref{lem:rat-points} guarantees that $xyz(x+y+z)\neq 0$ for any $\Fq$-point $[x:y:z]$ of the curve $C$. 

\textbf{Case 1.} Suppose $a_0b_0c_0(a_0+b_0+c_0)\ne 0$ and $[a_0:b_0:c_0]\ne [1:1:1]$. 

In this case,  the point $P=[a_0:b_0:c_0]$ is already smooth in $C(\Fq)$ by Lemma~\ref{lem:rat-points} and Proposition~\ref{prop:singular-points}. Hence, we may take $Q=P$ because $P$ always belongs to $T_P C$.

\textbf{Case 2.} Suppose $a_0+b_0+c_0=0$.

In this case, \eqref{eq:tangent-condition-fraction} yields 
\begin{equation}\label{eq:case2}
\frac{a_0}{x_0}+\frac{b_0}{y_0}+\frac{c_0}{z_0}=0.
\end{equation}
We search for a solution $[x_0:y_0:z_0]\ne [1:1:1]$ satisfying~\eqref{eq:rat-point-condition}. 

\textbf{Subcase 2.1.} $a_0+b_0+c_0=0$ and $a_0b_0c_0\ne 0$.

Since $\operatorname{char}(\Fq)>3$, we cannot have $a_0=b_0=c_0$. We may assume, without loss of generality, that $b_0\ne c_0$. Let $z_0=1$ and $y_0=-1$, and solve for $x_0$ according to the equation~\eqref{eq:case2}:
$$
x_0=\frac{a_0}{b_0-c_0}\in \Fq^{\ast}
$$
Clearly, $[x_0:y_0:z_0]\ne [1:1:1]$ and ~\eqref{eq:rat-point-condition} is satisfied. 

\textbf{Subcase 2.2.} $a_0+b_0+c_0=0$ and $a_0b_0c_0=0$.

By symmetry, we may assume that $a_0=0$; since $a_0+b_0+c_0=0$, then we have $[a_0:b_0:c_0]=[0:1:-1]$ and so, equation~\eqref{eq:case2} yields $y_0=z_0$. The point $[x_0:y_0:z_0]=[2:1:1]$ satisfies both \eqref{eq:rat-point-condition}~and~\eqref{eq:case2}. This concludes our proof that all points $[a_0:b_0:c_0]$ for which $a_0+b_0+c_0=0$ belong to a tangent line at a smooth $\Fq$-point of $C$.

\textbf{Case 3.} $a_0+b_0+c_0\neq 0$ and $a_0b_0c_0=0$. 

Since we seek points $[x_0:y_0:z_0]$ with $x_0+y_0+z_0\neq 0$, we can scale $[a_0:b_0:c_0]$ and $[x_0:y_0:z_0]$ so that
$$
a_0 + b_0 + c_0 = 3 \quad\quad \text{ and } \quad\quad x_0+y_0+z_0=9
$$
The equation~\eqref{eq:tangent-condition-fraction} now reads,
\begin{equation}\label{eq:case3}
1=\frac{a_0}{x_0}+\frac{b_0}{y_0}+\frac{3-a_0-b_0}{9-x_0-y_0};
\end{equation}
Since $a_0b_0c_0=0$, we may assume by symmetry that $a_0=0$. As a result, \eqref{eq:case3} reads
\begin{equation}
\label{eq:case3_a0=0}
1=\frac{b_0}{y_0} + \frac{3-b_0}{z_0}.
\end{equation}
If $b_0\notin \{0,-3,3\}$, then we let $z_0=6$, $y_0=6b_0/(3+b_0)$ and $x_0=(9-3b_0)/(3+b_0)$. Note that $[x_0:y_0:z_0]\neq [1:1:1]$ and satisfies both \eqref{eq:case3_a0=0} and ~\eqref{eq:rat-point-condition}.

If $b_0=0$, then we simply choose $[x_0:y_0:z_0]=[2:4:3]\neq [1:1:1]$ which satisfies both \eqref{eq:case3_a0=0} and ~\eqref{eq:rat-point-condition}.

If $b_0=-3$, then we get the solution $[x_0:y_0:z_0]=[-1:6:4]\ne [1:1:1]$ which satisfies both \eqref{eq:case3_a0=0} and ~\eqref{eq:rat-point-condition}.

If $b_0=3$, then we get the solution $[x_0:y_0:z_0]=[2:3:4]\ne [1:1:1]$ which satisfies both \eqref{eq:case3_a0=0} and equation~\eqref{eq:rat-point-condition}.

\textbf{Case 4.} $[a_0:b_0:c_0]=[1:1:1]$.

We can assume $a_0=b_0=c_0=1$, and also $x_0+y_0+z_0=9$ after scaling $[x_0:y_0:z_0]$. Then equation~\eqref{eq:tangent-condition-fraction} yields,
\begin{equation}\label{eq:case4}
1=\frac{1}{x_0}+\frac{1}{y_0}+\frac{1}{9-x_0-y_0}.
\end{equation}
Our goal is to find a solution $(3,3)\neq (x_0,y_0)\in\Fq^*\times \Fq^*$ to \eqref{eq:case4}.

After multiplying \eqref{eq:case4} by $x_0y_0(9-x_0-y_0)$, we obtain
$$
9x_0y_0-x_0^2y_0-x_0y_0^2=9y_0-x_0y_0-y_0^2+9x_0-x_0^2-x_0y_0+x_0y_0,
$$
which we rearrange as follows:
$$
y_0^2(x_0-1)+y_0(x_0-1)(x_0-9)-x_0(x_0-9)=0.
$$
Our goal is to show that the number of $\Fq$-points on the affine curve $Y$ given by the equation:
\begin{equation}\label{eq:case4-affine-curve}
y^2(x-1)+y(x-1)(x-9)-x(x-9)=0
\end{equation}
is strictly more than the number of points which we want to avoid from the set:
\begin{equation}\label{eq:avoid-points}
\{(0,9), (0,0), (9,0), (3,3)\}.
\end{equation}
Indeed, besides the point $(3,3)$,  the points $(x_0,y_0)$ on the curve~\eqref{eq:case4-affine-curve} which we have to avoid are the ones satisfying the equation:  
$$x_0y_0\cdot (x_0+y_0-9)=0.$$ 
We note that there are only three such points on the curve~\eqref{eq:case4-affine-curve}: $(0,0)$, $(0,9)$ and $(9,0)$; this follows easily from the equation~\eqref{eq:case4-affine-curve} after substituting either $x=0$, or $y=0$, or $x=9-y$.

Now, for each $\Fq$-point $(x_0,w_0)\ne (1,0)$ on the affine conic $\tilde{Y}$ given by the equation
$$w^2=(x-1)(x-9),$$
we have the $\Fq$-point $(x_0,y_0)$ on $Y$ given by
\begin{equation}\label{eq:affine-curve-solution}
(x_0,y_0)\colonequals \left(x_0,\frac{-(x_0-1)(x_0-9)+(x_0-3)w_0}{2(x_0-1)}\right).
\end{equation}
Since there are $q-2$ points $(x_0,w_0)\ne (1,0)$ on $\tilde{Y}(\Fq)$ (because we have $q+1$ points on its projective closure in $\bP^2$ and only two such points are on the line at infinity), we obtain $(q-2)$ $\Fq$-points on $Y$. Now, if $(x_1,w_1)\ne (x_0,w_0)$ are distinct points on $\tilde{Y}(\Fq)\setminus\{(1,0)\}$, then we get the corresponding points on $Y(\Fq)$ are also distinct \emph{unless} $x_0=x_1=3$ as can be seen from ~\eqref{eq:affine-curve-solution}. There are at most $2$ points on $\tilde{Y}(\Fq)$ having $x$-coordinate equal to $3$ (which in fact happens when $q=7$, in which case $(3,\pm 3)\in \tilde{Y}(\mathbb{F}_7)$). Thus, we are guaranteed to have at least $(q-3)$ distinct points in $Y(\Fq)$. Hence, as long as $q>7$, we are guaranteed to avoid the points listed in \eqref{eq:avoid-points}.

Therefore, the curve $C$ is tangent-filling under the hypothesis $q>7$ and $\operatorname{char}(\Fq)>3$. \end{proof}

\begin{remark} 
\label{rem:q_7}
The result in Theorem~\ref{thm:special-curve} is sharp in a sense that when $q=7$, the curve $x^{q-1}+y^{q-1}+z^{q-1}-3(x+y+z)^{q-1}=0$ is \emph{not} tangent-filling. Indeed, one can check that for the point $P=[1:1:1]$, there is no smooth $\mathbb{F}_7$-point $Q$ on this curve $C$ such that  $P\in T_Q C$. 
\end{remark}

\begin{bibdiv}
\begin{biblist}

\bib{AG23}{article}{
    AUTHOR = {Asgarli, Shamil},
    AUTHOR = {Ghioca, Dragos},
     TITLE = {Smoothness in pencils of hypersurfaces
              over finite fields},
   JOURNAL = {Bull. Aust. Math. Soc.},
    VOLUME = {107},
      YEAR = {2023},
    NUMBER = {1},
     PAGES = {85--94},
      ISSN = {0004-9727},
}

\bib{AGY22a}{article}{
Author ={Asgarli, Shamil},
Author = {Ghioca, Dragos},
Author = {Yip, Chi Hoi},
title={Blocking sets arising from plane curves over finite fields},
journal = {arXiv e-prints},
 eprint = {https://arxiv.org/abs/2208.13299},
year={2022},
}

\bib{AGY22b}{article}{
Author ={Asgarli, Shamil},
Author = {Ghioca, Dragos},
Author = {Yip, Chi Hoi},
title={Most plane curves over finite fields are not blocking},
journal = {arXiv e-prints},
 eprint = {https://arxiv.org/abs/2211.08523},
year={2022},
}

\bib{AGY23a}{article}{
Author ={Asgarli, Shamil},
Author = {Ghioca, Dragos},
Author = {Yip, Chi Hoi},
title={Proportion of blocking curves in a pencil},
journal = {arXiv e-prints},
 eprint = {https://arxiv.org/abs/2301.06019},
year={2023}
}

\bib{AGY23b}{article}{
Author ={Asgarli, Shamil},
Author = {Ghioca, Dragos},
Author = {Yip, Chi Hoi},
title={Existence of pencils with nonblocking hypersurfaces},
journal = {arXiv e-prints},
 eprint = {https://arxiv.org/abs/2301.09215},
year={2023}
}

\bib{AP96}{incollection}{
    AUTHOR = {Aubry, Yves},
    AUTHOR = {Perret, Marc},
     TITLE = {A {W}eil theorem for singular curves},
 BOOKTITLE = {Arithmetic, geometry and coding theory ({L}uminy, 1993)},
     PAGES = {1--7},
 PUBLISHER = {de Gruyter, Berlin},
      YEAR = {1996},
}

\bib{BH91}{article}{
    AUTHOR = {Bayer, Valmecir},
    AUTHOR = {Hefez, Abramo},
     TITLE = {Strange curves},
   JOURNAL = {Comm. Algebra},
    VOLUME = {19},
      YEAR = {1991},
    NUMBER = {11},
     PAGES = {3041--3059},
      ISSN = {0092-7872},
}

\bib{Blo94}{article}{
     AUTHOR = {Blokhuis, Aart},
     TITLE = {On the size of a blocking set in {${\rm PG}(2,p)$}},
   JOURNAL = {Combinatorica},
    VOLUME = {14},
      YEAR = {1994},
    NUMBER = {1},
     PAGES = {111--114},
      ISSN = {0209-9683},
}

\bib{BE08}{book}{
    AUTHOR = {Barwick, Susan},
    AUTHOR = {Ebert, Gary},
     TITLE = {Unitals in projective planes},
    SERIES = {Springer Monographs in Mathematics},
 PUBLISHER = {Springer, New York},
      YEAR = {2008},
     PAGES = {xii+193},
}

\bib{BLM23}{article}{
    AUTHOR = {Beelen, Peter},
    AUTHOR = {Landi, Leonardo},
    AUTHOR = {Montanucci, Maria},
     TITLE = {Classification of all {G}alois subcovers of the {S}kabelund
              maximal curves},
   JOURNAL = {J. Number Theory},
    VOLUME = {242},
      YEAR = {2023},
     PAGES = {46--72},
      ISSN = {0022-314X},
}

\bib{BM18}{article}{
    AUTHOR = {Beelen, Peter},
    AUTHOR = {Montanucci, Maria},
     TITLE = {A new family of maximal curves},
   JOURNAL = {J. Lond. Math. Soc. (2)},
    VOLUME = {98},
      YEAR = {2018},
    NUMBER = {3},
     PAGES = {573--592},
      ISSN = {0024-6107},
}

\bib{CHKT00}{article}{
    AUTHOR = {Cossidente, A.},
    AUTHOR = {Hirschfeld, J. W. P.},
    AUTHOR = {Korchm\'{a}ros, G.},
    AUTHOR = {Torres, F.},
     TITLE = {On plane maximal curves},
   JOURNAL = {Compositio Math.},
    VOLUME = {121},
      YEAR = {2000},
    NUMBER = {2},
     PAGES = {163--181},
      ISSN = {0010-437X},
}

\bib{CM06}{article}{
    AUTHOR = {Cafure, Antonio},
    AUTHOR = {Matera, Guillermo},
     TITLE = {Improved explicit estimates on the number of solutions of
              equations over a finite field},
   JOURNAL = {Finite Fields Appl.},
    VOLUME = {12},
      YEAR = {2006},
    NUMBER = {2},
     PAGES = {155--185},
      ISSN = {1071-5797},
}

\bib{Dur18}{article}{
    AUTHOR = {Duran Cunha, Gregory},
     TITLE = {Curves containing all points of a finite projective {G}alois
              plane},
   JOURNAL = {J. Pure Appl. Algebra},
    VOLUME = {222},
      YEAR = {2018},
    NUMBER = {10},
     PAGES = {2964--2974},
      ISSN = {0022-4049},
}

\bib{GGS10}{article}{
    AUTHOR = {Garcia, Arnaldo},
    AUTHOR = {G\"{u}neri, Cem},
    AUTHOR = {Stichtenoth, Henning},
     TITLE = {A generalization of the {G}iulietti-{K}orchm\'{a}ros maximal
              curve},
   JOURNAL = {Adv. Geom.},
    VOLUME = {10},
      YEAR = {2010},
    NUMBER = {3},
     PAGES = {427--434},
      ISSN = {1615-715X},
}

\bib{GK09}{article}{
    AUTHOR = {Giulietti, Massimo}, 
    AUTHOR = {Korchm\'{a}ros, G\'{a}bor},
     TITLE = {A new family of maximal curves over a finite field},
   JOURNAL = {Math. Ann.},
    VOLUME = {343},
      YEAR = {2009},
    NUMBER = {1},
     PAGES = {229--245},
      ISSN = {0025-5831},
}

\bib{Har92}{book}{
    AUTHOR = {Harris, Joe},
     TITLE = {Algebraic geometry},
    SERIES = {Graduate Texts in Mathematics},
    VOLUME = {133},
      NOTE = {A first course},
 PUBLISHER = {Springer-Verlag, New York},
      YEAR = {1992},
     PAGES = {xx+328},
      ISBN = {0-387-97716-3},
}

\bib{Hir79}{article}{
    AUTHOR = {Hirschfeld, J. W. P.},
     TITLE = {Projective geometries over finite fields},
    SERIES = {Oxford Mathematical Monographs},
 PUBLISHER = {The Clarendon Press, Oxford University Press, New York},
      YEAR = {1979},
     PAGES = {xii+474},
      ISBN = {0-19-853526-0},
}

\bib{Hom20}{article}{
    AUTHOR = {Homma, Masaaki},
     TITLE = {Fragments of plane filling curves of degree {$q+2$} over the
              finite field of {$q$} elements, and of affine-plane filling
              curves of degree {$q+1$}},
   JOURNAL = {Linear Algebra Appl.},
    VOLUME = {589},
      YEAR = {2020},
     PAGES = {9--27},
      ISSN = {0024-3795},
}

\bib{HK13}{article}{
    AUTHOR = {Homma, Masaaki},
    AUTHOR = {Kim, Seon Jeong},
     TITLE = {Nonsingular plane filling curves of minimum degree over a
              finite field and their automorphism groups: supplements to a
              work of {T}allini},
   JOURNAL = {Linear Algebra Appl.},
    VOLUME = {438},
      YEAR = {2013},
    NUMBER = {3},
     PAGES = {969--985},
      ISSN = {0024-3795},
}

\bib{HKT08}{book}{
    AUTHOR = {Hirschfeld, J. W. P.},
    AUTHOR = {Korchm\'{a}ros, G.},
    AUTHOR = {Torres, F.},
     TITLE = {Algebraic curves over a finite field},
    SERIES = {Princeton Series in Applied Mathematics},
 PUBLISHER = {Princeton University Press, Princeton, NJ},
      YEAR = {2008},
     PAGES = {xx+696},
      ISBN = {978-0-691-09679-7},
}

\bib{Kaj89}{article}{
    AUTHOR = {Kaji, Hajime},
     TITLE = {On the {G}auss maps of space curves in characteristic {$p$}},
   JOURNAL = {Compositio Math.},
    VOLUME = {70},
      YEAR = {1989},
    NUMBER = {2},
     PAGES = {177--197},
      ISSN = {0010-437X},
       URL = {http://www.numdam.org/item?id=CM_1989__70_2_177_0},
}

\bib{Par86}{article}{
    AUTHOR = {Pardini, Rita},
     TITLE = {Some remarks on plane curves over fields of finite
              characteristic},
   JOURNAL = {Compositio Math.},
    VOLUME = {60},
      YEAR = {1986},
    NUMBER = {1},
     PAGES = {3--17},
      ISSN = {0010-437X},
}

\bib{Pie94}{article}{
    AUTHOR = {Piene, Ragni},
     TITLE = {Projective algebraic geometry in positive characteristic},
 BOOKTITLE = {Analysis, algebra, and computers in mathematical research
              ({L}ule\aa , 1992)},
    SERIES = {Lecture Notes in Pure and Appl. Math.},
    VOLUME = {156},
     PAGES = {263--273},
 PUBLISHER = {Dekker, New York},
      YEAR = {1994},
}

\bib{Wal56}{article}{
    AUTHOR = {Wallace, Andrew H.},
     TITLE = {Tangency and duality over arbitrary fields},
   JOURNAL = {Proc. London Math. Soc. (3)},
    VOLUME = {6},
      YEAR = {1956},
     PAGES = {321--342},
      ISSN = {0024-6115},
}

\end{biblist}
\end{bibdiv}

\end{document}